\newtheorem{theorem}{Theorem}[section]
\newtheorem{lemma}[theorem]{Lemma}
\newtheorem{proposition}[theorem]{Proposition}
\newtheorem{corollary}[theorem]{Corollary}
\theoremstyle{definition}
\newtheorem{remark}[theorem]{Remark}
\newtheorem{problema}[theorem]{Question}
\newenvironment{taggedtheorem}[1]
 {\taggedtheoremx}
 {\endtaggedtheoremx}
\numberwithin{equation}{section}
\numberwithin{subsection}{section}
\newcommand{\mres}{\mathbin{\vrule height 1.6ex depth 0pt width
0.13ex\vrule height 0.13ex depth 0pt width 1.3ex}}
\newcommand{\Z}{\mathbb{Z}}
\newcommand{\R}{\mathbb{R}}
\newcommand{\Ha}{\mathcal{H}}
\newcommand{\spt}{\mathrm{spt}}
\newcommand{\Lip}{\mathrm{Lip}}
\newcommand{\D}{\mathscr{D}}
\newcommand{\Rc}{\mathscr{R}}
\newcommand{\I}{\mathscr{I}}
\newcommand{\F}{\mathscr{F}}
\newcommand{\M}{\mathbf{M}}
\newcommand{\Fl}{\mathbf{F}}
\newcommand{\modp}{{\rm mod}(p)}
\newcommand{\moddue}{{\rm mod}(2)}
\title{On the structure of flat chains modulo $p$}
\author{Andrea Marchese \and Salvatore Stuvard}
\newcommand{\Addresses}{{
  \bigskip
  \footnotesize

  A. M. \and S. S., \textsc{Universit\"at Z\"urich, Winterthurerstrasse 190, CH-8057 Z\"urich}
  \par\nopagebreak
  
  \textit{E-mail address}, A. M.: \texttt{andrea.marchese@math.uzh.ch}

  \medskip

  \textit{E-mail address}, S. S.: \texttt{salvatore.stuvard@math.uzh.ch}
   
}}
\begin{document}

\begin{abstract}
In this paper, we prove that every equivalence class in the quotient group of integral $1$-currents modulo $p$ in Euclidean space contains an integral current, with quantitative estimates on its mass and the mass of its boundary. Moreover, we show that the validity of this statement for $m$-dimensional integral currents modulo $p$ implies that the family of $(m-1)$-dimensional flat chains of the form $pT$, with $T$ a flat chain, is closed with respect to the flat norm. In particular, we deduce that such closedness property holds for $0$-dimensional flat chains, and, using a proposition from \textit{The structure of minimizing hypersurfaces mod $4$} by Brian White, also for flat chains of codimension $1$.

\vspace{4pt}
\noindent \textsc{Keywords:} Integral currents $\modp$, Flat chains $\modp$.

\vspace{4pt}
\noindent \textsc{AMS subject classification:} 49Q15.
\end{abstract}

\maketitle

\section{Introduction} \label{S1}

The theory of \emph{integral currents} was born in the 1960's after the work of Federer and Fleming \cite{FF60} out of the desire to solve Plateau's problem. Integral currents were thus introduced to provide a mathematical framework where the existence of orientable surfaces minimizing the volume among those spanning a given contour could be rigorously proved by direct methods in any dimension and codimension. 

In order to deal with non-orientable surfaces, Ziemer \cite{Zie62} introduced the notion of \emph{integral currents modulo $2$}. Further generalizations, such as \emph{integral currents modulo $p$} and \emph{flat chains modulo $p$}, were considered in order to treat a wider class of surfaces which can be realized, for instance, as soap films. An interesting property of such surfaces is that they can develop singularities in low codimension, unlike the classical solutions to Plateau's problem (see, for instance, \cite{Mor86} and \cite{WH86}). 

Moreover, integral currents, flat chains and their generalizations have proved to be flexible enough to describe and tackle similar problems in more abstract settings (see, in particular, \cite{WH99}, \cite{AK00}, \cite{DPH12} and \cite{AG13}).

Despite the substantial interest in the subject, the very structure of flat chains and integral currents modulo $p$ is yet to be completely understood. The initial idea is to define flat chains modulo $p$ by identifying currents which differ by $pT$, where $T$ is a ``classical'' flat chain. This definition, however, has one major drawback: the closedness of the classes with respect to the flat norm is a-priori not guaranteed. Hence, it is more convenient to define the classes of flat chains modulo $p$ as the flat closure of the equivalence classes mentioned above. The equivalence of the two definitions is still an open problem.

A second issue regards the structure of integral currents modulo $p$. They are defined as flat chains modulo $p$ with finite $p$-mass and finite $p$-mass of the boundary. It is not known whether each equivalence class contains at least one classical integral current.

In this work, we specifically address these two problems. In Section \ref{S2} we recall the basic terminology and the main results about classical flat chains and integral currents. Flat chains and integral currents modulo $p$ are introduced in Section \ref{S3}, where we also formulate two questions related to the two above problems and collect some partial answers from the literature. Finally, in Section \ref{S4} we throw light on the connection between the two questions, and we provide a positive answer to the second one in the case of $1$-dimensional currents. Moreover, we give an example illustrating how it is possible to produce situations in which the answer to the second question is negative in higher dimension.

\subsection*{Acknowledgements.} The authors would like to thank Camillo De Lellis for having posed the problem and for helpful discussions. A. M. and S. S. are supported by the ERC-grant ``Regularity of area-minimizing currents'' (306247).

\section{Classical results on flat chains} \label{S2}

In what follows we recall the basic terminology related to the theory of currents. We refer the reader to the introductory presentation given in \cite{Mor09} or to the standard textbooks \cite{Sim83}, \cite{KP08} for further details. The most complete reference remains the treatise \cite{Fed69}. 

\subsection{Currents.}
An \emph{$m$-dimensional current} $T$ in $\R^n$ ($m \leq n$) is a continuous linear functional on the space $\D^{m}(\R^n)$ of smooth compactly supported differential $m$-forms in $\R^n$, endowed with a locally convex topology built in analogy with the topology on $C^{\infty}_{c}(\R^n)$ with respect to which distributions are dual. 

The \emph{boundary} of $T$ is the $(m-1)$-dimensional current $\partial T$ defined by
\[
\langle \partial T, \omega \rangle := \langle T, d\omega \rangle
\]
for any smooth compactly supported $(m-1)$-form $\omega$. The \emph{mass} of $T$, denoted by $\M(T)$, is the (possibly infinite) supremum of $\langle T, \omega \rangle$ over all forms $\omega$ with $|\omega| \leq 1$ everywhere. 

The \emph{support} of a current $T$, denoted $\spt(T)$, is the intersection of all closed sets $C$ in $\R^n$ such that $\langle T, \omega \rangle = 0$ whenever $\omega\equiv 0$ on $C$.

\subsection{Rectifiable currents.}
A subset $E \subset \R^n$ is said to be $m$-\emph{rectifiable} if $\Ha^{m}(E) < \infty$ and $E$ can be covered, except for an $\Ha^{m}$-null subset, by countably many $m$-dimensional surfaces of class $C^{1}$. If $E$ is $m$-rectifiable, then a suitable notion of $m$-dimensional \emph{approximate tangent space} to $E$ can be defined for $\Ha^{m}$-a.e. $x \in E$. Such a tangent space will be denoted ${\rm Tan}(E,x)$ and it coincides with the classical tangent space if $E$ is a (piece of a) $C^1$ $m$-surface.   

Let $E$ be an $m$-rectifiable set in $\R^n$. An \emph{orientation} of $E$ is an $m$-vectorfield $\tau$ on $\R^n$ such that $\tau(x)$ is a simple $m$-vector with $|\tau(x)| = 1$ which spans ${\rm Tan}(E,x)$ at $\Ha^{m}$-a.e. point $x$. A \emph{multiplicity} on $E$ is an integer-valued function $\theta$ such that
\[
\int_{E} |\theta| \, d\Ha^{m} < \infty.
\]

For every choice of a triple $(E,\tau,\theta)$ as above, we denote by $T = \llbracket E, \tau, \theta \rrbracket$ the $m$-dimensional current whose action on a form $\omega$ is given by
\[
\langle T, \omega \rangle := \int_{E} \langle \omega(x), \tau(x) \rangle \theta(x) \, d\Ha^{m}(x).
\]
Currents of this type are called \emph{integer rectifiable $m$-currents}. The set of integer rectifiable $m$-currents in $\R^n$ with support in a compact $K \subset \R^n$ will be denoted $\Rc_{m,K}(\R^n)$. The symbol $\Rc_{m}(\R^n)$ will denote the union of $\Rc_{m,K}(\R^n)$ corresponding to all compact subsets $K \subset \R^n$. If $T = \llbracket E, \tau, \theta \rrbracket \in \Rc_{m}(\R^n)$, we denote by $\| T \|$ the measure given by
\[
\|T\|(A) := \int_{A \cap E} |\theta| \, d\Ha^{m} \hspace{0.5cm} \mbox{for every } A \subset \R^n \mbox{ Borel.}
\]
One can check that $\M(T) = \|T\|(\R^n)$ and thus, in particular, integer rectifiable currents have finite mass.\\

With the symbol $\I_{m,K}(\R^n)$ we denote the set of \emph{integral $m$-currents} supported in $K$, that is currents in $\Rc_{m,K}(\R^n)$ with integer rectifiable boundary. The meaning of the symbol $\I_{m}(\R^n)$ is understood. We recall the following fundamental
\begin{theorem}[Boundary Rectifiability, cf. {\cite[Theorem 4.2.16]{Fed69}}] \label{b_rect:thm}
 \begin{equation} \label{b_rect}
\I_{m,K}(\R^n) = \lbrace T \in \Rc_{m,K}(\R^n) \, \colon \, \M(\partial T) < \infty \rbrace.
\end{equation}
\end{theorem}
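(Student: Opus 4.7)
The plan is to prove the two inclusions in \eqref{b_rect}. The easy inclusion $\I_{m,K}(\R^n) \subseteq \lbrace T \in \Rc_{m,K}(\R^n) : \M(\partial T) < \infty \rbrace$ is immediate, since by definition a current in $\I_{m,K}(\R^n)$ has integer rectifiable boundary, and any integer rectifiable current has finite mass. The substance of the theorem is therefore the reverse inclusion: given $T \in \Rc_{m,K}(\R^n)$ with $\M(\partial T) < \infty$, one must exhibit $\partial T$ as an integer rectifiable current $\llbracket F, \sigma, \eta \rrbracket$.

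The strategy is to construct the triple $(F, \sigma, \eta)$ separately. Since $\M(\partial T) < \infty$ and $\spt(\partial T) \subseteq K$, the measure $\|\partial T\|$ is finite and compactly supported. By an argument based on the isoperimetric inequality applied to slices of $T$ by distance spheres around points of $\spt(\partial T)$, one derives a positive lower bound for the $(m-1)$-dimensional density $\Theta^{m-1}(\|\partial T\|, x)$ at $\|\partial T\|$-a.e.\ point, which forces $\|\partial T\|$ to be absolutely continuous with respect to $\Ha^{m-1} \mres F$ for some set $F$ of finite $\Ha^{m-1}$-measure. The classical structure theorem for sets of finite $\Ha^{m-1}$ measure then decomposes $F = F_r \cup F_u$ into its $(m-1)$-rectifiable and purely unrectifiable parts.

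The central step is to show $\|\partial T\|(F_u) = 0$. For this, I invoke the Besicovitch-Federer projection theorem: for almost every orthogonal projection $\pi$ onto an $(m-1)$-plane, $\pi(F_u)$ has zero $\Ha^{m-1}$-measure. Combined with the identity $\pi_{\#}(\partial T) = \partial(\pi_{\#}T)$ and a constancy-type argument for finite-mass $(m-1)$-currents in $\R^{m-1}$, this yields $\partial T \mres F_u = 0$. Once support on $F_r$ is established, the orientation $\sigma$ is chosen as a unit simple $(m-1)$-vector spanning the approximate tangent space ${\rm Tan}(F_r, x)$ at $\Ha^{m-1}$-a.e.\ $x$, and the integrality of the multiplicity $\eta(x)$ is obtained by slicing $\partial T$ transversally to ${\rm Tan}(F_r, x)$ by small disks, reducing the claim to the elementary $0$-dimensional case where a finite-mass current with vanishing boundary is necessarily an integer combination of Dirac masses by the Constancy Theorem.

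The principal obstacle is the elimination of the purely unrectifiable part $F_u$ via the Besicovitch-Federer projection theorem; this is the deep technical heart of the argument, while the density, orientation, and integrality steps are more routine consequences of slicing, approximate tangent spaces, and the Constancy Theorem.
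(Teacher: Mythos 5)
The paper does not prove this theorem; it is cited from Federer \cite[Theorem 4.2.16]{Fed69} and used as a black box, so there is no paper proof to compare against. Judged on its own terms, your outline has the right broad ``flavor'' (density lower bound $\Rightarrow$ carrier of finite $\Ha^{m-1}$-measure $\Rightarrow$ structure theorem decomposition $\Rightarrow$ slicing for integrality), but the pivotal step of eliminating the purely unrectifiable part does not close as written. You want to combine $\pi_{\sharp}(\partial T)=\partial(\pi_{\sharp}T)$ with the Besicovitch--Federer projection theorem. However, for \emph{every} orthogonal projection $\pi$ onto an $(m-1)$-plane the current $\pi_{\sharp}T$ is an $m$-current living in an $(m-1)$-dimensional space, hence $\pi_{\sharp}T=0$ and therefore $\pi_{\sharp}(\partial T)=0$ unconditionally. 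This identity carries no information whatsoever about the rectifiability of $F_u$: it is true regardless of what $\|\partial T\|$ is, and it certainly cannot distinguish the restriction $\partial T \mres F_u$ from the rest. More generally, the vanishing of the \emph{projected current} $\pi_{\sharp}(\partial T\mres F_u)$ (which is in any case not guaranteed to be a flat chain, since restriction does not preserve flat chains) is compatible with massive cancellation and does not force the \emph{projected measure} $\pi_{\sharp}\|\partial T\mres F_u\|$ to vanish, which is what would be needed to profit from $\Ha^{m-1}(\pi(F_u))=0$. To make a projection-based argument work one must instead control the $0$-dimensional slices $\langle\partial T,\pi,y\rangle$ for a.e.\ $\pi$ and a.e.\ $y$, relate their nonvanishing quantitatively to the mass of $\partial T$, and invoke a slicing criterion for rectifiability; this is a genuinely different and considerably more delicate route (cf.\ White, \emph{Rectifiability of flat chains}, 1999).

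A second, smaller issue: you attribute to the Constancy Theorem the fact that a $0$-dimensional flat chain of finite mass is an integer combination of Dirac masses. The Constancy Theorem is a codimension-$0$ statement about currents with vanishing boundary on a connected open set; the rectifiability of $0$-dimensional flat chains with finite mass is a separate nontrivial theorem (and in fact the base case of the induction in the slicing proofs of the general rectifiability statement). Also, the density lower bound for $\|\partial T\|$ that you invoke is usually established \emph{as part of} the rectifiability proof rather than as an independent preliminary, and the isoperimetric argument you gesture at needs the slices $\langle T,d(\cdot,x),r\rangle$ and the identity $\partial(T\mres B_r)=(\partial T)\mres B_r - \langle T,d(\cdot,x),r\rangle$ to be set up with some care; as written the logical order risks being circular.
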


\subsection{Flat chains.} The set of (integral) \emph{flat $m$-chains} in $\R^n$ with support in a compact $K \subset \R^n$ is denoted by $\F_{m,K}(\R^n)$ and defined by
\begin{equation} \label{chains}
\F_{m,K}(\R^n) := \lbrace T = R + \partial S \, \colon \, R \in \Rc_{m,K}(\R^n), S \in \Rc_{m+1,K}(\R^n) \rbrace.
\end{equation}
We also define the set $\F_{m}(\R^n)$ as the \emph{union} of the sets $\F_{m,K}(\R^n)$ corresponding to all compact subsets $K \subset \R^n$.

For any $T \in \F_{m,K}(\R^n)$, we define the \emph{flat norm}
\begin{equation} \label{flat}
\Fl_{K}(T) := \inf\lbrace \M(R) + \M(S) \, \colon \, R \in \Rc_{m,K}(\R^n), S \in \Rc_{m+1,K}(\R^n) \mbox{ and } T = R + \partial S \rbrace.
\end{equation}
It turns out that $\Fl_{K}$ induces a complete metric $d_{\Fl_K}$ on $\F_{m,K}(\R^n)$ setting
\[
d_{\Fl_K}(T_{1}, T_{2}) := \Fl_{K}(T_{1} - T_{2}).
\] 
Moreover, the mass $\M$ is lower semi-continuous with respect to the convergence in $\F_{m,K}(\R^n)$ induced by $d_{\Fl_{K}}$.

For every $K$, the class $\I_{m,K}(\R^n)$ is dense in $\Rc_{m,K}(\R^n)$ in mass, and consequently $\I_{m,K}(\R^n)$ is dense in $\F_{m,K}(\R^n)$ in flat norm. In fact, the same result holds more in general whenever the ambient space is a closed convex subset $E$ of a Banach space, if working with the general theory of currents in metric spaces introduced by Ambrosio and Kirchheim in \cite{AK00} (cf \cite[Proposition 14.7]{AK11}). In particular, given $T \in \F_{m,K}(\R^n)$ there exist sequences $\{ T_{j} \} \subset \I_{m,K}(\R^n)$, $\{ R_j \} \subset \Rc_{m,K}(\R^n)$, $\{ S_j \} \subset \Rc_{m+1,K}(\R^n)$ such that
\[
T = T_{j} + R_{j} + \partial S_{j} 
\]
and 
\[
\M(R_j) + \M(S_j) \to 0.
\]
If $T$ has finite mass, then the $\partial S_j$'s have finite mass too, and thus $S_j \in \I_{m+1,K}(\R^n)$. Therefore, the currents $T_{j} + \partial S_{j} \in \Rc_{m,K}(\R^n)$ approximate $T$ in mass, and this suffices to conclude that $T \in \Rc_{m,K}(\R^n)$ (cf. \cite[Lemma 27.5]{Sim83}). We have shown the following result:
\begin{theorem}[Rectifiability of flat chains with finite mass, cf. {\cite[Theorem 4.2.16]{Fed69}}] \label{mass_chain:thm}
\begin{equation} \label{mass_chain}
\Rc_{m,K}(\R^n) = \lbrace T \in \F_{m,K}(\R^n) \, \colon \, \M(T) < \infty \rbrace.
\end{equation}
\end{theorem}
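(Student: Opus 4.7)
The plan is to handle the two inclusions separately. The inclusion $\Rc_{m,K}(\R^n) \subseteq \lbrace T \in \F_{m,K}(\R^n) \, \colon \, \M(T) < \infty \rbrace$ is immediate: given $T \in \Rc_{m,K}(\R^n)$, the trivial decomposition $T = T + \partial 0$ places $T$ in $\F_{m,K}(\R^n)$ via \eqref{chains}, while the explicit integral formula for $\|T\|$ gives $\M(T) = \|T\|(\R^n) < \infty$. So the real work lies in the opposite direction.

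For the nontrivial inclusion, I would start from an arbitrary $T \in \F_{m,K}(\R^n)$ with $\M(T) < \infty$ and invoke the quantitative form of the density of $\I_{m,K}(\R^n)$ in $\F_{m,K}(\R^n)$ (established in the discussion preceding the theorem). This supplies sequences $T_j \in \I_{m,K}(\R^n)$, $R_j \in \Rc_{m,K}(\R^n)$ and $S_j \in \Rc_{m+1,K}(\R^n)$ with
\[
T = T_j + R_j + \partial S_j, \qquad \M(R_j) + \M(S_j) \to 0.
\]
The next step is to upgrade $S_j$ from rectifiable to integral. Rearranging the identity above yields $\partial S_j = T - T_j - R_j$; since each of $\M(T)$, $\M(T_j)$ and $\M(R_j)$ is finite, this forces $\M(\partial S_j) < \infty$. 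The Boundary Rectifiability Theorem~\ref{b_rect:thm} applied in dimension $m+1$ then gives $S_j \in \I_{m+1,K}(\R^n)$, so that $\partial S_j \in \Rc_{m,K}(\R^n)$.

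To finish, I would note that $T_j + \partial S_j \in \Rc_{m,K}(\R^n)$ and that $T - (T_j + \partial S_j) = R_j$, whose mass tends to $0$. Hence $T_j + \partial S_j \to T$ in mass, and the closure of $\Rc_{m,K}(\R^n)$ under mass convergence (\cite[Lemma 27.5]{Sim83}) yields $T \in \Rc_{m,K}(\R^n)$, as required. The only non-bookkeeping ingredient is the initial quantitative density statement; the Boundary Rectifiability Theorem and the mass-closure of rectifiable currents are then applied essentially off the shelf, and I expect no further difficulty.
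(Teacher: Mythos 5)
Your proof is correct and follows essentially the same route as the paper's: both invoke the flat-norm density of $\I_{m,K}(\R^n)$ in $\F_{m,K}(\R^n)$ to write $T = T_j + R_j + \partial S_j$, use finiteness of $\M(T)$ to upgrade $S_j$ to an integral current via the Boundary Rectifiability Theorem, and then conclude by closure of $\Rc_{m,K}(\R^n)$ under mass convergence. The only difference is cosmetic: you spell out the trivial inclusion and name the Boundary Rectifiability Theorem explicitly, both of which the paper leaves implicit.
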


We finally recall the following remarkable
\begin{theorem}[Compactness Theorem, cf. {\cite[Theorem 4.2.17]{Fed69}}] \label{compactness:thm}
Let $K \subset \R^n$ be a compact set and $\{T_{j}\}_{j=1}^{\infty} \subset \I_{m,K}(\R^n)$ a sequence of integral currents such that
\begin{equation} \label{compactness:hp}
\sup_{j \geq 1} \{ \M(T_{j}) + \M(\partial T_{j}) \} < \infty.
\end{equation}
Then, there exist $T \in \I_{m,K}(\R^n)$ and a subsequence $\{T_{j_{\ell}}\}$ such that
\begin{equation} \label{compactness:th}
\lim_{\ell \to \infty} \Fl_{K}(T - T_{j_{\ell}}) = 0.
\end{equation}
\end{theorem}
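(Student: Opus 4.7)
The plan is to combine the Federer--Fleming deformation theorem with the completeness of the flat norm and with the rectifiability results already recalled in this section (Theorems \ref{mass_chain:thm} and \ref{b_rect:thm}). The underlying idea is that, while $\I_{m,K}(\R^n)$ is infinite-dimensional and far from compact, the subset selected by the mass and boundary-mass bounds \eqref{compactness:hp} is ``almost finite-dimensional'' at every positive scale, once one projects onto the $m$-skeleton of a fine cubical grid.

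First I would invoke the deformation theorem to produce, for each $\varepsilon>0$ and each $j$, a polyhedral chain $P_{j,\varepsilon}$ supported in an $\varepsilon$-neighborhood $K_\varepsilon$ of $K$, lying on the $m$-skeleton of an axis-aligned grid of mesh $\varepsilon$, and satisfying
\[
\Fl_{K_\varepsilon}(T_j-P_{j,\varepsilon})\le C\,\varepsilon\bigl(\M(T_j)+\M(\partial T_j)\bigr),\qquad \M(P_{j,\varepsilon})+\M(\partial P_{j,\varepsilon})\le C\bigl(\M(T_j)+\M(\partial T_j)\bigr),
\]
for a dimensional constant $C=C(n,m)$. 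By \eqref{compactness:hp} the right-hand sides are uniformly bounded in $j$. Since the integer multiplicities assigned to the $m$-cells of the grid that meet $K_\varepsilon$ are uniformly bounded and the number of such cells is finite, for each fixed $\varepsilon$ the family $\{P_{j,\varepsilon}\}_j$ takes only finitely many values. In particular, along a suitable subsequence $P_{j,\varepsilon}$ is eventually constant.

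Applying this at the scales $\varepsilon_k=2^{-k}$ and diagonalizing, I extract a subsequence $\{T_{j_\ell}\}$ such that for every $k$ the approximants $P_{j_\ell,\varepsilon_k}$ stabilize in $\ell$. For $\ell,\ell'$ beyond the stabilization index at scale $k$ one then has $\Fl_{K_{\varepsilon_k}}(T_{j_\ell}-T_{j_{\ell'}})\le 2C\,2^{-k}\sup_j(\M(T_j)+\M(\partial T_j))$, so the subsequence is Cauchy with respect to $d_{\Fl_{K'}}$ for any compact $K'\supset K_{\varepsilon_1}$. Completeness of the flat metric furnishes a flat chain $T\in\F_m(\R^n)$ with $\Fl_{K'}(T-T_{j_\ell})\to 0$, and the flat convergence forces $\spt(T)\subset K$ by the mass bound on the deformation error as $\varepsilon\to 0$.

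It remains to promote $T$ to an integral current. Since $\partial$ is continuous with respect to the flat norm, $\partial T_{j_\ell}\to\partial T$ in flat norm. Lower semicontinuity of $\M$ then yields $\M(T)<\infty$ and $\M(\partial T)<\infty$, so Theorem \ref{mass_chain:thm} gives $T\in\Rc_{m,K}(\R^n)$, and boundary rectifiability (Theorem \ref{b_rect:thm}) upgrades this to $T\in\I_{m,K}(\R^n)$. The main obstacle is the deformation step itself: getting a polyhedral approximant on a grid with simultaneous quantitative control on flat distance, mass, and boundary mass is the delicate content of the Federer--Fleming deformation theorem. Once that tool is in hand, the finite-dimensionality of each grid reduces compactness to a Cauchy-in-$d_{\Fl_{K'}}$ diagonal argument, and the two rectifiability theorems above handle integrality automatically.
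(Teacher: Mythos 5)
The paper does not actually prove this theorem; it recalls it as a classical fact and cites Federer \cite[Theorem~4.2.17]{Fed69}. Your outline is the standard Federer--Fleming argument from that source: deformation onto grid skeletons, finiteness of admissible polyhedral configurations at each mesh scale under the mass bound, a diagonal selection giving a Cauchy sequence in the flat metric, and then the closure machinery (here, Theorems~\ref{mass_chain:thm} and~\ref{b_rect:thm}) to promote the flat limit to an integral current. This is the right skeleton, and offloading the hard analytic content onto the two recalled rectifiability theorems is appropriate in context, since the paper has already recorded them as available tools.

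One step deserves tightening. You justify $\spt(T)\subset K$ ``by the mass bound on the deformation error as $\varepsilon\to 0$,'' which is not quite a proof. The clean route is: since the masses are uniformly bounded, flat convergence $T_{j_\ell}\to T$ implies weak-$*$ convergence, so for any form $\omega$ vanishing on a neighborhood of $K$ one has $\langle T,\omega\rangle=\lim_\ell\langle T_{j_\ell},\omega\rangle=0$, whence $\spt(T)\subset K$. Alternatively, argue as in the paper's proof of Proposition~\ref{equiv_statements}: the stabilized approximants $P_{\varepsilon_k}$ are supported in $K_{\varepsilon_k}$ and converge to $T$ in flat norm, so testing against a form vanishing on some $K_{\varepsilon_\ell}$ and sending $k\to\infty$ shows $\spt(T)\subset\bigcap_\ell K_{\varepsilon_\ell}=K$. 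With that fix the argument is complete.
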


\subsection{Polyhedral chains.} Given an $m$-dimensional simplex $\sigma$ in $\R^n$ with constant unit orientation $\tau$, we denote by $\llbracket \sigma \rrbracket$ the rectifiable current $\llbracket \sigma, \tau, 1 \rrbracket$. Finite linear combinations of (the currents associated with) oriented $m$-simplexes with integer coefficients are called (integral) \emph{polyhedral $m$-chains}. The set of polyhedral $m$-chains in $\R^n$ will be denoted $\mathscr{P}_{m}(\R^n)$. The main motivation for introducing polyhedral chains is the following theorem. 


\begin{theorem}[Polyhedral approximation, cf. {\cite[Corollary 4.2.21]{Fed69}}] \label{poly_app}
If $T \in \I_{m}(\R^n)$, ${\varepsilon > 0}$, $K \subset \R^n$ is a compact set such that $\spt(T) \subset {\rm int}K$, then there exists $P \in \mathscr{P}_{m}(\R^n)$, with $\spt(P) \subset K$, such that
\begin{equation} \label{poly_app:eq}
\Fl_{K}(T - P) < \varepsilon, \hspace{0.5cm}
\M(P) \leq \M(T) + \varepsilon, \hspace{0.5cm} \M(\partial P) \leq \M(\partial T) + \varepsilon.
\end{equation}
\end{theorem}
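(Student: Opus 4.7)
The plan is to combine the Federer-Fleming deformation theorem on a fine cubical grid with an averaging argument over grid translations, in order to sharpen the resulting dimensional constant to the optimal factor $1+\varepsilon$.

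First I would fix $\delta>0$ much smaller than $\mathrm{dist}(\spt(T),\partial K)$ and consider a standard cubical grid $\mathcal{G}_{\delta,v}$ of side $\delta$ translated by a vector $v\in [0,\delta]^n$. The deformation theorem applied to $T$ produces a polyhedral chain $P=P_{\delta,v}\in\mathscr{P}_m(\R^n)$ supported on the $m$-skeleton of $\mathcal{G}_{\delta,v}$, together with currents $Q\in\Rc_m(\R^n)$ and $S\in\Rc_{m+1}(\R^n)$ satisfying $T=P+Q+\partial S$, $\M(Q)\leq c\,\delta\,\M(\partial T)$ and $\M(S)\leq c\,\delta\,\M(T)$, for a constant $c=c(m,n)$. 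In particular $\Fl_K(T-P)\leq c\,\delta\,(\M(T)+\M(\partial T))$, which becomes smaller than $\varepsilon$ once $\delta$ is sufficiently small; for $v$ small the hypothesis $\spt(T)\subset\mathrm{int}\,K$ guarantees that $\spt(P)\subset K$.

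The delicate point is to upgrade the non-sharp bounds $\M(P)\leq c\,\M(T)$ and $\M(\partial P)\leq c\,\M(\partial T)$ coming from the deformation theorem to the sharp estimates $\M(P)\leq\M(T)+\varepsilon$ and $\M(\partial P)\leq\M(\partial T)+\varepsilon$. The construction of $P$ proceeds cube-by-cube by a radial projection from the center of each cube onto its boundary, whose Lipschitz constant blows up only near the center. Averaging over the translation $v\in[0,\delta]^n$ via Fubini yields
\[
\int_{[0,\delta]^n}\M(P_{\delta,v})\,dv\leq \delta^n\bigl(\M(T)+\eta(\delta)\bigr),
\]
with $\eta(\delta)\to 0$ as $\delta\to 0$; the key input is that, by rectifiability, $\Ha^m$-a.e.\ point of $\spt(T)$ admits an approximate tangent $m$-plane, along which the mass distortion produced by the projection onto the $m$-skeleton tends to $1$ as the cube size shrinks. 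Applying the same averaging to the $(m-1)$-dimensional integral current $\partial T$ yields the analogous control for $\M(\partial P)$, so for $\delta$ small and a suitable choice of $v$ realizing the average, the polyhedral chain $P$ satisfies all three estimates simultaneously.

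The hard part is exactly this improvement from the dimensional constant in the raw deformation theorem to the sharp factor $1+\varepsilon$: the naive radial projection can expand mass by an arbitrarily large amount inside a single cube whenever $T$ concentrates near its center, and only by averaging over the grid translation, together with exploiting the approximate tangent structure of $T$ guaranteed by rectifiability, does one obtain sharp bounds valid for a generic grid position.
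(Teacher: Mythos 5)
The paper does not prove this theorem: it is quoted verbatim from Federer's treatise (Corollary 4.2.21), and the only argument of this type that the paper actually carries out is the $\modp$ analogue, Theorem \ref{p_poly_app}, whose proof you may wish to compare with. In that proof the crucial input is Federer's strong approximation theorem $(4.2.20)^{\nu}$, which produces a polyhedral chain $P$ together with a $C^1$ diffeomorphism $f$ of $\R^n$ with $\Lip(f),\Lip(f^{-1})\le 1+\delta$ such that $P$ is mass-close to $f_\sharp T$; the sharp bound $\M(P)\le (1+\delta)^m\M(T)+\delta$ then falls out because push-forward by a nearly isometric diffeomorphism distorts mass by at most $(1+\delta)^m$. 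So Federer's route is: structure theory of integral currents $\to$ approximate $T$ by finitely many $C^1$ pieces $\to$ straighten them to polyhedra by a small ambient diffeomorphism. No cubical grid appears.

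Your route is fundamentally different, and it has a genuine gap at the step you yourself flag as delicate. The deformation theorem onto a cubical grid does not, even after averaging over translations, give a mass bound of the form $\M(P)\le\M(T)+\varepsilon$. The problem is not concentration of $T$ near face centers (which averaging can indeed dilute); it is that the projection of a generic affine $m$-plane onto the $m$-skeleton of a cube of side $\delta$ distorts $\Ha^m$-measure by a factor that depends only on the orientation of the plane relative to the coordinate axes, and this factor does \emph{not} tend to $1$ as $\delta\to 0$. The simplest instance is $m=1$, $n=2$: a straight segment making a $45^\circ$ angle with the axes projects, for every $\delta$ and every translation $v$, onto a staircase of length roughly $\sqrt{2}$ times the length of the segment. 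Translating or refining the grid changes nothing, because both the segment and the skeleton rescale. So the claimed estimate
\[
\int_{[0,\delta]^n}\M(P_{\delta,v})\,dv\le \delta^n\bigl(\M(T)+\eta(\delta)\bigr)\qquad\text{with }\eta(\delta)\to 0
\]
is false in general, and the sentence ``the mass distortion produced by the projection onto the $m$-skeleton tends to $1$ as the cube size shrinks'' is the incorrect step: the distortion is scale-invariant, controlled by the angle of the tangent plane to the grid, not by the mesh. This is precisely why the deformation theorem carries a dimensional constant $c(m,n)>1$ in its mass estimates that cannot be removed by choosing the grid well. To get the sharp constant $1+\varepsilon$ one must be allowed to \emph{rotate and bend} the target polyhedra to nearly align them with the approximate tangent planes of $T$, which is exactly what Federer's diffeomorphism $f$ does and what a fixed cubical grid cannot do.
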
 

\section{Flat chains modulo $p$} \label{S3}

In this section, we recall the definitions of the sets of currents with coefficients in $\Z_{p}$, and collect some of the most relevant open questions regarding their structure.

\subsection{Definitions and basic properties.}
Let $p$ be a positive integer. For any ${T \in \F_{m,K}(\R^n)}$, we define
\begin{equation} \label{p_flat}
\begin{split}
\Fl_{K}^{p}(T) := \inf \lbrace \M(R) + \M(S) \, \colon \, &R \in \Rc_{m,K}(\R^n), S \in \Rc_{m+1,K}(\R^n) \mbox{ s.t. } \\
&T = R + \partial S + pQ \mbox{ for some } Q \in \F_{m,K}(\R^n) \rbrace.
\end{split}
\end{equation}

Observe that, since $\I_{m,K}(\R^n)$ is flat-dense in $\F_{m,K}(\R^n)$, the infimum is unchanged if we let $Q$ run in $\I_{m,K}(\R^n)$. Also notice that the inequality $\Fl_{K}^{p}(T) \leq \Fl_{K}(T)$ holds for any $T \in \F_{m,K}(\R^n)$.

Now, we introduce the equivalence relation $\modp$ in $\F_{m,K}(\R^n)$: given $T, \tilde{T} \in \F_{m,K}(\R^n)$, we say that $T = \tilde{T} \, \modp$ in $\F_{m,K}(\R^n)$ if and only if $\Fl_{K}^{p}(T - \tilde{T}) = 0$. The corresponding quotient group will be denoted $\F_{m,K}^{p}(\R^n)$. As in the classical case, $\Fl_{K}^{p}$ induces a distance $d_{\Fl_{K}^{p}}$ which makes $\F_{m,K}^{p}(\R^n)$ a complete metric space. 

It is evident that if $T - \tilde{T} = pQ$ for some $Q \in \F_{m,K}(\R^n)$, then $T = \tilde{T} \, \modp$ in $\F_{m,K}(\R^n)$, but the converse implication is not known (see Question \ref{pb1} below).

We say that two flat $m$-chains $T, \tilde{T} \in \F_{m}(\R^n)$ are equivalent $\modp$ in $\F_{m}(\R^n)$, and we write $T = \tilde{T} \, \modp$ in $\F_{m}(\R^n)$ if there exists a compact $K \subset \R^n$ such that $\Fl_{K}^{p}(T - \tilde{T}) = 0$. The elements of the corresponding quotient group $\F_{m}^{p}(\R^n)$ are called \emph{flat $m$-chains modulo $p$} and they will be denoted by $\left[T \right]$.

\begin{remark}
\begin{itemize}
\item[$(i)$] Note that if $T\in\F_{m}(\R^n)$ and $\spt(T)\subset K$, then it is false in general that $T\in\F_{m,K}(\R^n)$. The simplest counterexample being the $0$-dimensional current obtained as the boundary of (the rectifiable 1-current associated to) a countable union of disjoint intervals $S_i$ contained in $\left[ 0, 1 \right]$ and clustering only at the origin, when $K=\{0\}\cup\bigcup_i \partial S_i$. Nevertheless, it is a consequence of the polyhedral approximation theorem \ref{poly_app} that if $\spt(T) \subset {\rm int} K$, then indeed $T \in \F_{m,K}(\R^n)$ (see also \cite[Theorem 4.2.22]{Fed69}).

\item[$(ii)$] One would expect that the following property holds. If $T=\tilde{T}\, \modp$ in $\F_{m}(\R^n)$, then $T=\tilde{T}\, \modp$ in $\F_{m,K}(\R^n)$, whenever $K$ is a compact set which contains $\spt(T)$ and $\spt(\tilde T)$, and $T,\tilde{T}\in\F_{m,K}(\R^n)$. Nevertheless, the validity of this property does not appear to be obvious for a general compact set $K$.  On the other hand, if $K$ is also convex, the validity of the property is immediate. Indeed, let $K'$ be a compact set such that $T-\tilde{T}=R_j+\partial S_j+pQ_j$ with $R_j \in \Rc_{m,K'}(\R^n), S_j \in \Rc_{m+1,K'}(\R^n)$, $Q_j \in \F_{m,K'}(\R^n)$ and $\M(R_j) + \M(S_j)\leq \frac{1}{j}$. Then, denoting by $\pi$ the (1-Lipschitz) closest-point projection on $K$, and by $\pi_\sharp$ the push-forward operator through $\pi$ (see \cite[Section 4.1.14]{Fed69}), we have that 
$$T-\tilde{T}=\pi_\sharp T-\pi_\sharp\tilde{T}=\pi_\sharp R_j + \partial\pi_\sharp S_j + p\pi_\sharp Q_j,$$
where $\pi_\sharp R_j \in \Rc_{m,K}(\R^n), \pi_\sharp S_j \in \Rc_{m+1,K}(\R^n)$ and $\pi_\sharp Q_j \in \F_{m,K}(\R^n)$. Moreover $\M(\pi_\sharp R_j) + \M(\pi_\sharp S_j)\leq\M(R_j) + \M(S_j)$, hence $T=\tilde{T}\, \modp$ in $\F_{m,K}(\R^n)$.

\item[$(iii)$] Observe that, using the same argument as in $(ii)$, we are able to conclude that if $T\in\F_{m}(\R^n)$ and $\spt(T)\subset K$ then $T\in\F_{m,K}(\R^n)$ when $K$ is convex (or, more in general, whenever there exists a Lipschitz projection onto $K$).
\end{itemize}
\end{remark}

\subsection{Boundary, mass and support modulo $p$.}

It is immediate to see that if ${T = \tilde{T} \, \modp}$ in $\F_{m}(\R^n)$ (resp. in $\F_{m,K}(\R^n)$), then also $\partial T = \partial \tilde{T} \, \modp$ in $\F_{m-1}(\R^n)$ (resp. in $\F_{m-1,K}(\R^n)$), and therefore a boundary operator $\partial$ can be defined also in the quotient groups $\F_{m}^{p}(\R^n)$ (resp. in $\F_{m,K}^{p}(\R^n)$) in such a way that
\begin{equation} \label{p_bound}
\partial \left[ T \right] = \left[ \partial T \right] \hspace{0.5cm} \mbox{for every } T \in \F_{m}(\R^n).
\end{equation}

For $T \in \F_{m}(\R^n)$, we also define its \emph{mass modulo $p$}, or simply $p$-\emph{mass} $\M^{p}(T)$, as the least $t \in \R \cup \{+\infty\}$ such that for every $\varepsilon > 0$ there exists a compact $K \subset \R^n$ and a rectifiable current $R \in \Rc_{m,K}(\R^n)$ satisfting
\begin{equation} \label{p_mass}
\Fl_{K}^{p}(T - R) < \varepsilon \quad \mbox{and} \quad \M(R) \leq t + \varepsilon.
\end{equation}

One has that $\M^{p}(T_1 + T_2) \leq \M^{p}(T_1) + \M^{p}(T_2)$ and $\M^{p}(T) = \M^{p}(\tilde{T})$ if $T = \tilde{T} \, \modp$ in $\F_{m}(\R^n)$. This allows to regard $\M^{p}$ as a functional on the quotient group $\F_{m}^{p}(\R^n)$. Such functional is lower semi-continuous with respect to the $\Fl_{K}^{p}$-convergence for every $K$. 

Finally, we denote by $\spt^{p}(\left[T\right])$ the \emph{support modulo $p$} of $\left[ T \right] \in \F_{m}^{p}(\R^n)$, given by
\begin{equation} \label{p_spt}
\spt^{p}(\left[T\right]) := \bigcap \lbrace \spt(\tilde{T}) \, \colon \, \tilde{T} \in \F_{m}(\R^n), \, \tilde{T} = T \, \modp \mbox{ in } \F_{m}(\R^n) \rbrace.
\end{equation}

\subsection{Rectifiable and integral currents modulo $p$.}

We define now the group $\Rc_{m}^{p}(\R^n)$ of the \emph{integer rectifiable currents modulo $p$} by setting
\begin{equation} \label{p_irc}
\Rc_{m,K}^{p}(\R^n) := \lbrace \left[ T \right] \in \F_{m,K}^{p}(\R^n) \, \colon \, T \in \Rc_{m,K}(\R^n) \rbrace.
\end{equation}
As usual, $\Rc_{m}^{p}(\R^n)$ is the union over $K$ compact of $\Rc_{m,K}^{p}(\R^n)$. Clearly, not all the elements in a class $\left[ T \right] \in \Rc_{m,K}^{p}(\R^n)$ are classical rectifiable currents, but whenever we write $\left[ T \right] \in \Rc_{m,K}^{p}(\R^n)$ we will always implicitly intend that $T$ is a rectifiable representative of its class.\\

A current $R = \llbracket E, \tau, \theta \rrbracket \in \Rc_{m,K}(\R^n)$ is called \emph{representative modulo $p$} if and only if
\[
\| R \|(A) \leq \frac{p}{2} \Ha^{m}(E \cap A) \hspace{0.5cm} \mbox{for every Borel set } A \subset \R^n.
\]
Evidently, this condition is equivalent to ask that 
\[
|\theta(x)| \leq \frac{p}{2} \hspace{0.5cm} \mbox{for $\|R\|$-a.e. } x.
\]
Since obviously for any integer $z$ there exists a (unique) integer $- \frac{p}{2} < \tilde{z} \leq \frac{p}{2}$ with ${z \equiv \tilde{z} \, ({\rm mod}\, p)}$, then for any $T \in \Rc_{m,K}(\R^n)$ there exists an integer rectifiable current $R\in \Rc_{m,K}(\R^n)$ such that $R = T \, \modp$ in $\F_{m,K}(\R^n)$ and $R$ is representative modulo $p$. We immediately conclude that any $T \in \Rc_{m,K}(\R^n)$ can be written as
\begin{equation} \label{decomp}
T = R + p Q,
\end{equation}
where $R, Q \in \Rc_{m,K}(\R^n)$ and $R$ is representative modulo $p$. It is proved in \cite[4.2.26, p. 430]{Fed69} that
\begin{equation} \label{representative}
\M^{p}(T) = \M(R), \hspace{0.5cm} \spt^{p}(\left[T\right]) = \spt(R),
\end{equation}
if $R$ is representative modulo $p$ of the current $T$.

A modulo $p$ version of Theorem \ref{mass_chain:thm} is contained in \cite[(4.2.16)$^{\nu}$, p. 431]{Fed69}: 
\begin{theorem}[Rectifiability of flat chains modulo $p$] \label{p_mass_chain:thm}
\begin{equation} \label{p_mass_chain}
\Rc_{m,K}^{p}(\R^n) = \lbrace \left[ T \right] \in \F_{m,K}^{p}(\R^n) \, \colon \, \M^{p}(\left[T\right]) < \infty \rbrace.
\end{equation}
\end{theorem}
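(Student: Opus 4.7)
The inclusion $\Rc_{m,K}^p(\R^n) \subseteq \{[T] \in \F_{m,K}^p(\R^n) : \M^p([T]) < \infty\}$ is immediate: choosing any rectifiable representative $T$ of $[T]$ and testing definition~\eqref{p_mass} with $R = T$ gives $\M^p([T]) \leq \M(T) < \infty$.

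For the reverse inclusion, set $M := \M^p([T]) < \infty$. After enlarging $K$ to a closed ball containing it (harmless by the Lipschitz-projection argument of Remark~(ii) above), definition~\eqref{p_mass} yields a sequence of rectifiable currents $R_j \in \Rc_{m,K}(\R^n)$ with $\Fl_K^p(T - R_j) < 2^{-j}$ and $\M(R_j) \leq M + 2^{-j}$. By \eqref{decomp}--\eqref{representative} we may take each $R_j$ to be a representative modulo $p$, so that $\M(R_j) = \M^p([R_j])$. The strategy is to upgrade this $\Fl_K^p$-Cauchy sequence of classical rectifiable currents to one that is Cauchy in mass; its limit, being a flat chain of finite mass, would then be rectifiable by Theorem~\ref{mass_chain:thm}, and one would argue that it represents $[T]$.

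Concretely, using the flat-density of $\I_{m,K}(\R^n)$ in $\F_{m,K}(\R^n)$, choose decompositions
\[
R_{j+1} - R_j = A_j + \partial B_j + p W_j, \qquad A_j \in \Rc_{m,K}(\R^n),\ B_j \in \Rc_{m+1,K}(\R^n),\ W_j \in \I_{m,K}(\R^n),
\]
with $\M(A_j) + \M(B_j) < 2^{-j}$. The telescoping partial sums $\tilde R_\ell := R_1 + \sum_{j<\ell} A_j$ are Cauchy in mass and hence converge in mass to a rectifiable $\tilde R_\infty \in \Rc_{m,K}(\R^n)$ by Theorem~\ref{mass_chain:thm}. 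From the identity
\[
R_\ell - \tilde R_\ell = \partial\Bigl(\sum_{j<\ell} B_j\Bigr) + p \sum_{j<\ell} W_j,
\]
passing to the $\Fl_K^p$-limit one deduces $[T] - [\tilde R_\infty] = [\partial B_\infty]$ in $\F_{m,K}^p(\R^n)$, where $B_\infty := \sum_j B_j \in \Rc_{m+1,K}(\R^n)$ is the mass-limit of the partial sums.

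The main obstacle is the residual term $\partial B_\infty$: although $B_\infty$ itself is rectifiable with mass at most $1$, its boundary $\partial B_\infty$ need not have finite mass, so $\tilde R_\infty + \partial B_\infty$ is only obviously a flat representative of $[T]$. Closing the gap requires a more delicate analysis that iteratively improves $\tilde R_\infty$ by absorbing corrections from the residual class $[\partial B_\infty]$ while keeping the total mass under control. This is the heart of the argument, and we refer to the construction of Federer in \cite[(4.2.16)$^\nu$, p.~431]{Fed69} for the complete proof.
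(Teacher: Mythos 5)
The paper does not prove Theorem~\ref{p_mass_chain:thm}; it records the statement and refers to Federer \cite[(4.2.16)$^{\nu}$, p.~431]{Fed69}, so there is no in-paper proof to compare against. Your easy inclusion is correct. For the reverse inclusion, the telescoping construction is a reasonable start and the identity $[T] - [\tilde R_\infty] = [\partial B_\infty]$ is right, but you correctly spot the gap and then do not close it: $\partial B_\infty$ is a flat chain with small \emph{flat} norm, $\Fl_K(\partial B_\infty)\le\M(B_\infty)$, but with no control on $\M(\partial B_\infty)$, so $\tilde R_\infty + \partial B_\infty$ need not be rectifiable. Notice why the analogous argument for Theorem~\ref{mass_chain:thm} does close: there one has $\partial S_j = T - T_j - R_j$ and the hypothesis $\M(T)<\infty$ directly bounds $\M(\partial S_j)$, forcing $S_j\in\I_{m+1,K}(\R^n)$. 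In the mod-$p$ decomposition $T = R_j + \partial S_j + pQ_j$ the extra term $pQ_j$ destroys that bound, and the hypothesis here controls only $\M^p$, not $\M$, so the trick does not transfer. Iterating does not help either: restarting the scheme from $\tilde R_\infty$ merely reproduces a rectifiable current that is close to $[T]$ in $\Fl_K^p$ with bounded mass, which is exactly the hypothesis you started from, so the argument is circular. The final paragraph therefore identifies a genuine gap, and the deferral to Federer means the inclusion has not actually been established. For the record, rectifiability of finite-mass flat chains with coefficients in a group is a nontrivial theorem in its own right; Federer's proof of this mod-$p$ instance goes through his slicing and deformation machinery rather than a mass-convergence argument of the type attempted here.
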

Hence, if $\left[ T \right] \in \F_{m,K}^{p}(\R^n)$ has finite $\M^p$ mass, then there exists $R \in \Rc_{m,K}(\R^n)$ such that $R = T \, \modp$ in $\F_{m,K}(\R^n)$, $\M(R) = \M^{p}(\left[T\right])$, and $\spt(R) = \spt^{p}(\left[T\right]).$\\

Next, we define the group $\I_{m}^{p}(\R^n)$ of the \emph{integral currents modulo $p$} as the union of the groups
\[
\I_{m,K}^{p}(\R^n) := \lbrace \left[ T \right] \in \Rc_{m,K}^{p}(\R^n) \, \colon \, \partial\left[ T \right] \in \Rc_{m-1,K}^{p}(\R^n) \rbrace.
\]

The conclusions about integer rectifiable currents modulo $p$ deriving from the above discussion allow us to say that if $\left[ T \right] \in \I_{m,K}^{p}(\R^n)$ then $\M^{p}(\left[T\right]) < \infty$, $\M^{p}(\partial \left[T\right]) < \infty$ and that there are currents $R \in \Rc_{m,K}(\R^n)$ and $S \in \Rc_{m-1,K}(\R^n)$ such that $R = T \, \modp$ in $\F_{m,K}(\R^n)$ and $S = \partial T \, \modp$ in $\F_{m-1,K}(\R^n)$. In particular, $R$ and $S$ may be chosen to be representative modulo $p$, so that $\M(R) = \M^{p}(T)$ and $\M(S) = \M^{p}(\partial T)$. It is not known whether it is possible to choose $I \in \I_{m,K}(\R^n)$ such that $T = I \, \modp$ in $\F_{m,K}(\R^n)$ (see Question \ref{pb2} below).

A modulo $p$ version of the Boundary Rectifiability Theorem can be straightforwardly deduced from Theorem \ref{p_mass_chain:thm}, as we have:
\begin{theorem}[Boundary Rectifiability modulo $p$, cf. {\cite[(4.2.16)$^{\nu}$]{Fed69}}] \label{p_b_rect:thm}
 \begin{equation} \label{p_b_rect}
\I_{m,K}^{p}(\R^n) = \lbrace \left[T\right] \in \Rc_{m,K}^{p}(\R^n) \, \colon \, \M^{p}(\partial \left[T\right]) < \infty \rbrace.
\end{equation}
\end{theorem}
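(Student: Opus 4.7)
The plan is to establish the two inclusions separately; the statement is essentially a direct transcription of the rectifiability criterion for flat chains modulo $p$ (Theorem \ref{p_mass_chain:thm}) applied to the boundary class $\partial[T]$, and this is why the authors call the deduction ``straightforward''.

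For the inclusion from left to right, suppose $[T]\in\I_{m,K}^{p}(\R^n)$. By the very definition of $\I_{m,K}^{p}(\R^n)$, the class $\partial[T]$ lies in $\Rc_{m-1,K}^{p}(\R^n)$, so there exists a classical rectifiable representative $S \in \Rc_{m-1,K}(\R^n)$ with $S = \partial T \, \modp$ in $\F_{m-1,K}(\R^n)$. Testing the definition of $\M^{p}$ in \eqref{p_mass} against the choice $R=S$ (with arbitrary $\varepsilon>0$) yields $\M^{p}(\partial[T]) \le \M(S) < \infty$, as desired.

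For the reverse inclusion, let $[T]\in\Rc_{m,K}^{p}(\R^n)$ satisfy $\M^{p}(\partial[T])<\infty$. First I would verify that $\partial[T]$ really belongs to $\F_{m-1,K}^{p}(\R^n)$: since $T\in\Rc_{m,K}(\R^n)\subset\F_{m,K}(\R^n)$, one has the trivial decomposition $\partial T = 0 + \partial T$ with $0\in\Rc_{m-1,K}(\R^n)$ and $T\in\Rc_{m,K}(\R^n)$, so $\partial T\in\F_{m-1,K}(\R^n)$, and passing to the quotient via \eqref{p_bound} puts $\partial[T]$ into $\F_{m-1,K}^{p}(\R^n)$. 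Now Theorem \ref{p_mass_chain:thm} applied in dimension $m-1$ gives immediately $\partial[T]\in\Rc_{m-1,K}^{p}(\R^n)$, and hence $[T]\in\I_{m,K}^{p}(\R^n)$ by definition.

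The only step with actual content beyond definition-chasing is the appeal to Theorem \ref{p_mass_chain:thm}, which is where all the rectifiability information is stored. There is no genuine obstacle here: in contrast with Federer's original boundary rectifiability theorem (Theorem \ref{b_rect:thm}), whose proof requires a separate argument, its modulo $p$ counterpart comes essentially for free once one has finite-mass rectifiability modulo $p$.
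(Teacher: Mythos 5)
Your proof is correct and is exactly the "straightforward deduction from Theorem \ref{p_mass_chain:thm}" that the paper asserts without spelling out: you chase the definitions of $\I_{m,K}^{p}$ and $\Rc_{m,K}^{p}$ in both directions and apply the finite-mass rectifiability criterion modulo $p$ to $\partial[T]$ in dimension $m-1$. Nothing to add.
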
 

We conclude with the following modulo $p$ version of the Polyhedral approximation Theorem \ref{poly_app}, which can be deduced from \cite[(4.2.20)$^{\nu}$]{Fed69}. Since the statement does not appear in \cite{Fed69}, for the reader's convenience we include here the proof.
\begin{theorem}[Polyhedral approximation modulo $p$] \label{p_poly_app}
If $\left[T\right] \in \I_{m}^{p}(\R^n)$, ${\varepsilon > 0}$, $K \subset \R^n$ is a compact set such that $\spt^{p}(\left[T\right]) \subset {\rm int}K$, then there exists $P \in \mathscr{P}_{m}(\R^n)$, with $\spt(P) \subset K$, such that
\begin{equation} \label{p_poly_app:eq}
\Fl_{K}^{p}(T - P) < \varepsilon, \hspace{0.5cm}
\M^{p}(P) \leq \M^{p}(T) + \varepsilon, \hspace{0.5cm} \M^{p}(\partial P) \leq \M^{p}(\partial T) + \varepsilon.
\end{equation}
\end{theorem}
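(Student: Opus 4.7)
My plan is to mimic the proof of the classical polyhedral approximation theorem (Theorem \ref{poly_app}), with all mass quantities replaced by their mod $p$ analogues, and appealing to the deformation theorem in its mod $p$ form, i.e.\ the statement referred to as $(4.2.20)^{\nu}$ in \cite{Fed69}.

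The starting point is to select rectifiable representatives modulo $p$ of both $T$ and $\partial T$. By Theorem \ref{p_mass_chain:thm} (and the discussion immediately preceding the present statement), there exist $R \in \Rc_{m,K}(\R^n)$ and $S \in \Rc_{m-1,K}(\R^n)$ with $R = T \, \modp$ in $\F_{m,K}(\R^n)$, $S = \partial T \, \modp$ in $\F_{m-1,K}(\R^n)$, and
\[
\M(R) = \M^p([T]), \quad \M(S) = \M^p(\partial[T]), \quad \spt(R) \cup \spt(S) \subset \spt^p([T]) \subset {\rm int}\, K.
\]

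For $\delta > 0$ small enough that a $Cn\delta$-neighborhood of $\spt^p([T])$ still lies in ${\rm int}\, K$, I fix a cubical grid of sidelength $\delta$ and push $[T]$ onto its $m$-skeleton via the piecewise-linear retraction employed in the classical deformation theorem. This produces a polyhedral chain $P_\delta \in \mathscr{P}_m(\R^n)$ supported in $K$. Since the retraction is Lipschitz, commutes with $\partial$, and preserves the equivalence modulo $p$, the classical deformation estimates, applied to the finite quantities $\M(R) = \M^p([T])$ and $\M(S) = \M^p(\partial[T])$, descend to the mod $p$ framework and give
\[
\Fl_K^p(T - P_\delta) \leq C \delta\bigl(\M^p([T]) + \M^p(\partial[T])\bigr),
\]
\[
\M^p(P_\delta) \leq C\, \M^p([T]), \qquad \M^p(\partial P_\delta) \leq C\, \M^p(\partial[T]),
\]
with $C = C(n)$. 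To sharpen the multiplicative constant $C$ to $1 + \varepsilon$, I iterate as in Federer's derivation of Theorem \ref{poly_app} from the deformation theorem (cf.\ \cite[4.2.21]{Fed69}): since the residual $T - P_\delta$ has $\M^p$-mass of order $\delta$ times the initial quantities, applying the procedure recursively on progressively finer grids yields polyhedral corrections with geometrically decaying $\M^p$-contributions, whose sum $P \in \mathscr{P}_m(\R^n)$ is a polyhedral chain supported in $K$ satisfying all three estimates in \eqref{p_poly_app:eq}.

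The main obstacle is the rigorous transfer of the deformation estimates to the mod $p$ setting. One must verify that the piecewise-linear retraction descends to a well-defined operator on $\F_{m,K}^p(\R^n)$ producing polyhedral chains with integer coefficients, and, crucially, that the $\M^p$-mass of the boundary of the deformed chain is controlled by $\M^p(\partial[T])$ rather than by the possibly infinite $\M(\partial R)$. This last control comes from the fact that any boundary contributions produced by the retraction with multiplicities divisible by $p$ vanish modulo $p$, so that in the mod $p$ accounting the boundary mass of $P_\delta$ sees only the finite quantity $\M(S)$.
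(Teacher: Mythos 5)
There is a genuine gap, and it lies at exactly the spot the paper's proof spends most of its effort on.

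First, a terminological mix-up with consequences: you identify $(4.2.20)^{\nu}$ with ``the deformation theorem in its mod $p$ form,'' but Federer's $(4.2.20)^{\nu}$ is the strong polyhedral approximation by a Lipschitz-$(1+\delta)$ diffeomorphism $f$, not the cubical-grid deformation theorem $(4.2.9)$. The paper cites $(4.2.20)^{\nu}$ precisely because it already delivers the $(1+\delta)^{m}$ factor on $\M^{p}(P)$ and $\M^{p}(\partial P)$ in one shot, so the iteration you propose in the style of $[4.2.21]$ is never needed. If, instead, you want to run the cubical-grid argument, you must prove (or locate in Federer) a genuinely $\mathrm{mod}\,p$ deformation theorem with $\M^{p}$ bounds on \emph{all} three error terms; your proposal asserts that the classical estimates ``descend'' without establishing this.

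Second, and more substantively: the heuristic you give to close the gap addresses only the boundary-mass estimate and not the $\Fl_{K}^{p}$ estimate. It is true that, because the pushforward under the retraction is a $\mathrm{mod}\,p$ homomorphism and commutes with $\partial$, the boundary of the deformed chain agrees $\mathrm{mod}\,p$ with the deformation of $S$, so $\M^{p}(\partial P_\delta) \lesssim \M(S) = \M^p(\partial[T])$. But the deformation theorem's flat estimate has the form $R - P_\delta = A + \partial B$ with $\M(A), \M(B) \leq c\,\delta\bigl(\M(R) + \M(\partial R)\bigr)$, and $\M(\partial R)$ may well be infinite. The observation that ``boundary contributions divisible by $p$ vanish modulo $p$'' does nothing to control the error currents $A$ and $B$ appearing in the decomposition — these are genuine $m$- and $(m+1)$-dimensional chains whose masses blow up with $\M(\partial R)$, and nothing forces them to be multiples of $p$. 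This is precisely the difficulty the paper's proof confronts head-on: after invoking $(4.2.20)^{\nu}$, the $\M^{p}$ bounds on $P$ and $\partial P$ follow in two lines, but the $\Fl_{K}^{p}$ bound requires the homotopy formula $P - T = (P - f_{\sharp}T) + \partial h_{\sharp}(\llbracket (0,1) \rrbracket \times T) + h_{\sharp}(\llbracket (0,1) \rrbracket \times \partial T)$, and then the key step of replacing the potentially infinite-mass $\partial T$ by a rectifiable $Z$ with $\M(Z)\leq \M^{p}(\partial T)+\delta$ and $\Fl_{K}^{p}(\partial T - Z)\leq\delta$, together with a second application of the homotopy formula to absorb the residual $\partial S$ term. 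Your proposal omits this entirely, so the $\Fl_K^p(T-P) < \varepsilon$ conclusion is not reached.
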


\begin{proof}
Let $T \in \Rc_{m}(\R^{n})$ be a (rectifiable) representative modulo $p$ of $\left[ T \right]$. In particular, by formula \eqref{representative}, $T$ satisfies $\spt(T) = \spt^{p}(\left[ T \right]) \subset {\rm int}K$ and $\M(T) = \M^{p}(T)$. Fix $\varepsilon > 0$, and let $0 < \delta \leq \varepsilon$ be such that $\{ x \in \R^{n} \, \colon \, {\rm dist}(x, \spt(T)) < \delta \} \subset K$. By \cite[Theorem (4.2.20)$^{\nu}$]{Fed69}, there exist $P \in \mathscr{P}_{m}(\R^{n})$ with $\spt(P) \subset K$ and a diffeomorphism $f \in C^{1}(\R^{n}, \R^{n})$ such that:
\begin{itemize}
\item[$(i)$] $\M^{p}(P - f_{\sharp}T) + \M^{p}(\partial P - f_{\sharp}\partial T) \leq \delta$;

\item[$(ii)$] $\Lip(f) \leq 1 + \delta$, and $\Lip(f^{-1}) \leq 1 + \delta$;

\item[$(iii)$] $|f(x) - x| \leq \delta \mbox{ for } x \in \R^{n}$, and $f(x) = x \mbox{ if } {\rm dist}(x, \spt(T)) \geq \delta$.
\end{itemize}

From $(i)$ it readily follows that
\begin{equation} \label{ppa:1}
\M^{p}(P) \leq \delta + \M^{p}(f_{\sharp}T) \leq \delta + (1 + \delta)^{m} \M^{p}(T),
\end{equation}
and analogously
\begin{equation} \label{ppa:2}
\M^{p}(\partial P) \leq \delta + \M^{p}(f_{\sharp}\partial T) \leq \delta + (1 + \delta)^{m-1} \M^{p}(\partial T).
\end{equation}
In order to prove the estimate on the $\Fl^{p}_{K}$ distance, let $h$ be the affine homotopy from the identity map to $f$, i.e. $h(t,x) := (1 - t) x + t f(x)$, and observe that the homotopy formula (see \cite[Section 4.1.9]{Fed69}) yields
\begin{equation} \label{ppa:3}
P - T = P - f_{\sharp}T + \partial \left( h_{\sharp}(\llbracket \left( 0,1 \right) \rrbracket \times T) \right) + h_{\sharp}(\llbracket \left( 0,1 \right) \rrbracket \times \partial T).
\end{equation}
Now, since $\M^{p}(\partial T) < \infty$, there exists a rectifiable current $Z \in \Rc_{m-1,K}(\R^{n})$ such that
\begin{equation} \label{ppa:4}
\Fl_{K}^{p}(\partial T - Z) \leq \delta \hspace{0.2cm} \mbox{ and } \hspace{0.2cm} \M(Z) \leq \M^{p}(\partial T) + \delta.  
\end{equation}
In particular, this implies the existence of $R \in \Rc_{m-1,K}(\R^{n})$, $S \in \Rc_{m,K}(\R^{n})$ and $Q \in \I_{m-1,K}(\R^{n})$ with $\M(R) + \M(S) \leq 2 \delta$ such that $\partial T - Z = R + \partial S + pQ$. If combined with \eqref{ppa:3}, this gives
\begin{equation} \label{ppa:5}
P - T = P - f_{\sharp}T + \partial h_{\sharp}(\llbracket \left( 0,1 \right) \rrbracket \times T) + h_{\sharp}(\llbracket \left( 0,1 \right) \rrbracket \times Z) + h_{\sharp}(\llbracket \left( 0,1 \right) \rrbracket \times (R + \partial S + pQ)).
\end{equation}
Since, again by the homotopy formula,
\[
h_{\sharp}(\llbracket \left( 0,1 \right) \rrbracket \times \partial S) = f_{\sharp}S - S - \partial h_{\sharp}(\llbracket \left( 0,1 \right) \rrbracket \times S),
\]
we can finally re-write equation \eqref{ppa:5} as follows:
\begin{equation} \label{ppa:6}
\begin{split}
P - T = \,& P - f_{\sharp}T \\
&+ h_{\sharp}(\llbracket \left( 0,1 \right) \rrbracket \times (Z + R)) + f_{\sharp}S - S \\
&+ \partial h_{\sharp}(\llbracket \left( 0,1 \right) \rrbracket \times (T - S)) \\
&+ p h_{\sharp}(\llbracket \left( 0,1 \right) \rrbracket \times Q).  
\end{split}
\end{equation}

Therefore, we can finally estimate
\begin{equation} \label{ppa:7}
\begin{split}
\Fl_{K}^{p}(P - T) \leq \,& \Fl_{K}^{p}(P - f_{\sharp}T) \\
&+ \M(h_{\sharp}(\llbracket \left( 0,1 \right) \rrbracket \times (Z + R))) + \M(f_{\sharp}S) + \M(S) \\
&+ \M(h_{\sharp}(\llbracket \left( 0,1 \right) \rrbracket \times (T - S))) \\
\leq & 3\delta + 2\delta (1+\delta)^{m} + \delta (2 + \delta) (\M^{p}(T) + \M^{p}(\partial T) + 3\delta),
\end{split}
\end{equation}
where we have used \cite[Formula (26.23)]{Sim83} to estimate the first and last addenda in the second line. 

The conclusion, formula \eqref{p_poly_app:eq}, clearly follows from \eqref{ppa:1}, \eqref{ppa:2} and \eqref{ppa:7} for a suitable choice of $\delta = \delta(\varepsilon, m, \M^{p}(T), \M^{p}(\partial T))$.
\end{proof}

\subsection{Questions on the structure of flat chains and integral currents modulo $p$.}

As already anticipated, two very natural questions arise about the structure of flat chains and integral currents modulo $p$ (see \cite[4.2.26]{Fed69}).

We fix a compact subset $K \subset \R^n$.

\begin{problema} \label{pb1}
Given $T,\tilde T\in\F_{m,K}(\R^n)$, is it true that $T = \tilde T \, \modp$ in $\F_{m,K}(\R^n)$ if and only if $T-\tilde T= pQ$ for some $Q \in \F_{m,K}(\R^n)$? In other words, using the density of $\I_{m,K}(\R^n)$ in $\F_{m,K}(\R^n)$, the problem is to prove or disprove the following statement. Given three sequences $\{ R_j \} \subset \Rc_{m,K}(\R^n)$, $\{ S_j \} \subset \Rc_{m+1,K}(\R^n)$,  $\{ Q_j \} \subset \I_{m,K}(\R^n)$ such that 
\begin{equation} \label{pb1:hp1}
T- \tilde T= R_{j} + \partial S_{j} + p Q_{j} \hspace{0.5cm} \forall j,
\end{equation}
and
\begin{equation} \label{pb1:hp2}
\lim_{j\to \infty} \left( \M(R_j) + \M(S_j) \right) = 0,
\end{equation}
then $T-\tilde T = p Q$ for some $Q \in \F_{m,K}(\R^n)$.
\end{problema}

\begin{remark} \label{remarkone}
As we shall soon see, the answer to the above question is affirmative if the class $\F_{m,K}(\R^n)$ is replaced by the class $\Rc_{m,K}(\R^n)$: in other words, given integer rectifiable currents $T, \tilde{T}$ one has that $T = \tilde{T} \, \modp$ in $\F_{m,K}(\R^n)$ if and only if $T - \tilde{T} = p Q$ for some $Q \in \Rc_{m,K}(\R^n)$. As a corollary, Question \ref{pb1} admits affirmative answer for $m = n$, since $\F_{n,K}(\R^n) = \Rc_{n,K}(\R^n)$. For $0 \leq m \leq n-1$, the question is widely open. 
\end{remark}

\begin{problema} \label{pb2}
Given $\left[ T \right] \in \I_{m,K}^{p}(\R^n)$, does there exist an \emph{integral} current $I \in \I_{m,K}(\R^n)$ such that $I = T \, \modp$ in $\F_{m,K}(\R^n)$? In other words: is it true that 
\[
\I_{m,K}^{p}(\R^n) = \lbrace \left[ T \right] \, \colon \, T \in \I_{m,K}(\R^n) \rbrace?
\] 
\end{problema}

\begin{remark}
The answer is trivial for $m=0$, since integral and integer rectifiable $0$-dimensional currents are the same class. In \cite[4.2.26, p. 426]{Fed69}, Federer does not really present this issue as a ``question'', but he rather claims that the answer is negative, in general dimension and codimension. Nevertheless, the counterexample he suggests (an infinite sum of disjoint ${\bf RP}^{2}$ in $\R^6$ with the property that the sum of the areas is finite but the sum of the lengths of the bounding projective lines is infinite) is not fully satisfactory (cf. \cite[Problem 3.3]{Bro86}). Indeed, it allows one to negatively answer the question only for very special choices of the set $K$ (in particular, the question remains open when $K$ is a convex set). 
\end{remark}

\subsection{Some partial answers from the literature.}

An immediate consequence of \eqref{representative} is the following: if $T,\tilde T \in \Rc_{m,K}(\R^n)$ are such that $T = \tilde T \, \modp$ in $\F_{m,K}(\R^n)$, then evidently $\M^{p}(T-\tilde T) = \M^{p}(0) = 0$, and hence the representative modulo $p$ of $T-\tilde T$ is $R = 0$ because of \eqref{representative}. Therefore, equation \eqref{decomp} yields $T-\tilde T = p Q$ for some integer rectifiable current $Q\in\Rc_{m,K}(\R^n)$. In conclusion, we have the following
\begin{proposition}
The answer to Question \ref{pb1} is affirmative in the class of integer rectifiable currents. Therefore:
\begin{equation} \label{p_rect}
\Rc_{m,K}(\R^n)\cap\{T\in\F_{m,K}(\R^n)\,:\,T=0\,\modp\text{ in } \F_{m,K}(\R^n)\} = \{pR\,:\,R\in\Rc_{m,K}(\R^n)\}.
\end{equation}
\end{proposition}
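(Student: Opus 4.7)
The plan is to read off the conclusion almost directly from the structure of representatives modulo $p$ together with the identities \eqref{representative}. The starting observation is that the difference $U := T - \tilde T$ belongs to $\Rc_{m,K}(\R^n)$ (since rectifiable currents form a group under addition), and by hypothesis satisfies $U = 0 \, \modp$ in $\F_{m,K}(\R^n)$, i.e.\ $\Fl_K^p(U) = 0$.

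First I would establish that $\M^p(U) = 0$. This follows because $\M^p$ is a well-defined functional on the quotient $\F_m^p(\R^n)$ (it is invariant under the equivalence $\modp$), so $\M^p(U) = \M^p(0) = 0$. Next, applying the decomposition \eqref{decomp} to the rectifiable current $U$, I obtain
\[
U = R + p Q,
\]
with $R, Q \in \Rc_{m,K}(\R^n)$ and $R$ representative modulo $p$. By the first equality in \eqref{representative}, $\M(R) = \M^p(U) = 0$, hence $R = 0$, which yields $U = pQ$, i.e.\ $T - \tilde T = pQ$ with $Q \in \Rc_{m,K}(\R^n)$, as desired. The identity \eqref{p_rect} is then just the specialization $\tilde T = 0$.

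There is essentially no main obstacle here: the proof is a one-line consequence of the existence of a representative modulo $p$ inside each rectifiable class and of the equality $\M(R) = \M^p(T)$ for such representatives, both of which are quoted from \cite[4.2.26]{Fed69}. The only point that deserves care is the invariance $\M^p(T) = \M^p(\tilde T)$ whenever $T = \tilde T \, \modp$, which has already been noted immediately after the definition \eqref{p_mass} of $\M^p$, so no additional work is needed.
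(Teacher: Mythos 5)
Your proposal is correct and follows the same route as the paper: both take $U = T - \tilde T$, observe $\M^p(U) = \M^p(0) = 0$, decompose $U = R + pQ$ with $R$ a representative modulo $p$ via \eqref{decomp}, and invoke $\M(R) = \M^p(U) = 0$ from \eqref{representative} to conclude $R = 0$. No meaningful difference.
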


In particular, the following corollary holds true:

\begin{corollary}
Let $T, \tilde{T} \in \Rc_{m}(\R^n)$. Then, $T = \tilde{T} \, \modp$ in $\F_{m}(\R^n)$ if and only if $T = \tilde{T} \, \modp$ in $\F_{m,K}(\R^n)$ for every $K$ compact with $\spt(T) \cup \spt(\tilde{T}) \subset K$.
\end{corollary}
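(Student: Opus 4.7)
The plan is to handle the two directions separately, with the substantive content concentrated in the forward implication, where we will invoke the preceding proposition.

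The backward direction is essentially tautological. Since $T$ and $\tilde T$ are rectifiable, their supports are compact, so some compact $K$ with $\spt(T)\cup\spt(\tilde T)\subset K$ exists. For any such $K$, the hypothesis gives $\Fl_{K}^{p}(T-\tilde T)=0$, which is exactly the defining condition for $T=\tilde T\,\modp$ in $\F_{m}(\R^n)$.

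For the forward direction, assume $T=\tilde T\,\modp$ in $\F_{m}(\R^n)$, so that $\Fl_{K_{0}}^{p}(T-\tilde T)=0$ for some compact $K_{0}$. First I would enlarge $K_{0}$ to a compact set $K_{0}'$ containing $\spt(T)\cup\spt(\tilde T)$; since every admissible decomposition for $\Fl_{K_{0}}^{p}$ remains admissible for $\Fl_{K_{0}'}^{p}$ when $K_{0}\subset K_{0}'$, we still have $\Fl_{K_{0}'}^{p}(T-\tilde T)=0$. Now $T-\tilde T\in\Rc_{m,K_{0}'}(\R^n)$, and the preceding proposition (the affirmative answer to Question \ref{pb1} in the rectifiable class, equation \eqref{p_rect}) applies to yield $Q_{0}\in\Rc_{m,K_{0}'}(\R^n)$ with $T-\tilde T=pQ_{0}$.

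The key step is then to observe that the support of $Q_{0}$ is automatically contained in $\spt(T)\cup\spt(\tilde T)$. Indeed, if $Q_{0}=\llbracket E,\tau,\theta\rrbracket$, then $pQ_{0}=\llbracket E,\tau,p\theta\rrbracket$, and since $p\neq 0$ the multiplicities $\theta$ and $p\theta$ vanish on precisely the same $\Ha^{m}$-a.e.\ set; hence $\spt(Q_{0})=\spt(pQ_{0})=\spt(T-\tilde T)\subset\spt(T)\cup\spt(\tilde T)$. It follows that for every compact $K$ containing $\spt(T)\cup\spt(\tilde T)$ we have $Q_{0}\in\Rc_{m,K}(\R^n)\subset\F_{m,K}(\R^n)$, and choosing $R=0$, $S=0$, $Q=Q_{0}$ in the definition \eqref{p_flat} gives $\Fl_{K}^{p}(T-\tilde T)=0$, as desired. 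I do not foresee any real obstacle: the heavy lifting has been done by the preceding proposition, and the rest is a careful bookkeeping of supports. The only delicate point is the identity $\spt(pQ_{0})=\spt(Q_{0})$, which genuinely uses the integer (torsion-free) nature of the coefficients and would fail in a $\modp$ setting.
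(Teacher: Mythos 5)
Your proof is correct and follows essentially the same route as the paper: apply the preceding proposition on the compact $K'$ where $\Fl^p_{K'}(T-\tilde T)=0$ to obtain $T-\tilde T=pQ$ with $Q$ rectifiable, then observe that $Q$ is supported in $\spt(T)\cup\spt(\tilde T)$ and hence in any admissible $K$. You are slightly more explicit than the paper in justifying $\spt(Q)=\spt(pQ)$ via the torsion-free coefficients, and your initial enlargement of $K_0$ to $K_0'$ is harmless but unnecessary, since the support-bookkeeping at the end already does the work.
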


\begin{proof}
The ``if'' implication is trivial. For the converse, assume $T = \tilde{T} \, \modp$ in $\F_{m}(\R^n)$ and fix any compact set $K$ such that $\spt(T) \cup \spt(\tilde{T}) \subset K$. By definition, there exists a compact set $K'$ such that $\Fl^{p}_{K'}(T - \tilde{T}) = 0$, which, by the above proposition, implies
\[
T - \tilde{T} = pQ
\]
for some $Q \in \Rc_{m,K'}(\R^n)$. Note that since $T - \tilde{T}$ is supported in $K$, so is $Q$, and thus $\Fl_{K}(T - \tilde{T}) = 0$, i.e. $T = \tilde{T} \, \modp$ in $\F_{m,K}(\R^n)$.
\end{proof}

From now on, by virtue of the previous corollary, for rectifiable currents $T$ and $\tilde{T}$ in $\F_{m}(\R^n)$ which are equivalent modulo $p$ we will just write $T = \tilde{T} \, \modp$ without specifying in which class the equivalence relation is meant.
\\

In codimension $0$, B. White \cite{WH79} gave an affirmative answer to Question \ref{pb2}.
\begin{theorem}[cf. {\cite[Proposition 2.3]{WH79}}] \label{White}
Let $T\in \Rc_{n,K}(\R^n)$. Then, $\left[ T \right] \in \I_{n,K}^{p}(\R^n)$ if and only if the select representative modulo $p$ of $T$ is an integral current.
\end{theorem}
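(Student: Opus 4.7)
The \emph{if} direction is immediate: if $R \in \I_{n,K}(\R^n)$ is the representative modulo $p$ of $T$, then $[T]=[R]$ satisfies $\M^p([T])\leq \M(R)<\infty$ and $\M^p(\partial[T])\leq \M(\partial R)<\infty$, hence $[T] \in \I^p_{n,K}(\R^n)$. For the converse, my plan is to combine the polyhedral approximation theorem modulo $p$ with the classical compactness theorem. Write $R = \tilde\theta \llbracket \R^n \rrbracket$ with $\tilde\theta$ integer-valued and $-p/2 < \tilde\theta \leq p/2$ a.e.; since $\M(R) = \M^p(T)<\infty$ is automatic, the goal reduces to proving $\M(\partial R)<\infty$.

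Enlarge $K$ to some compact $K' \supset K$ with $\spt^p([T]) \subset \mathrm{int}(K')$, and apply Theorem \ref{p_poly_app} to produce polyhedral chains $P_j \in \mathscr{P}_n(\R^n)$ with $\spt(P_j) \subset K'$, $\Fl^p_{K'}(T - P_j) \to 0$, and both $\M^p(P_j)$ and $\M^p(\partial P_j)$ uniformly bounded. For each $j$, let $R_j$ be the representative modulo $p$ of $P_j$: since the multiplicity of $P_j$ is a piecewise-constant integer function, its pointwise reduction to $(-p/2, p/2]$ yields a polyhedral, hence integral, current $R_j$ with $\M(R_j) = \M^p(P_j)$. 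The crucial estimate is for $\M(\partial R_j)$. On each codimension-one face of the underlying polyhedral complex, $\partial R_j$ has multiplicity equal to the jump of the multiplicity of $R_j$, an integer in $[-(p-1), p-1]$, while the representative modulo $p$ of $\partial R_j$ (equivalently of $\partial P_j$, since $[\partial R_j] = [\partial P_j]$) has multiplicity equal to this jump further reduced to $(-p/2, p/2]$. Since $|m|/|\tilde m| \leq p-1$ for any nonzero $m \in [-(p-1), p-1]$ with $\tilde m$ its mod-$p$ reduction, we get
\[
\M(\partial R_j) \;\leq\; (p-1)\,\M^p(\partial R_j) \;=\; (p-1)\,\M^p(\partial P_j),
\]
also uniformly bounded.

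The Compactness Theorem \ref{compactness:thm} then yields a subsequence $R_{j_k} \to R^*$ in flat norm, with $R^* \in \I_{n, K'}(\R^n)$. Since $\Rc_{n+1, K'}(\R^n) = \{0\}$, in top dimension flat convergence coincides with mass convergence, so $\tilde\theta_{R_{j_k}} \to \tilde\theta_{R^*}$ in $L^1$ and, along a further subsequence, a.e.; hence $\tilde\theta_{R^*}$ is integer-valued and takes values in $(-p/2, p/2]$ a.e. Moreover $[R_{j_k}] = [P_{j_k}] \to [T]$ in $\Fl^p_{K'}$, and since $\Fl^p_{K'} \leq \Fl_{K'}$, also $[R^*] = [T]$ in $\F^p_{n, K'}(\R^n)$. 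By the proposition preceding this theorem (applied to rectifiable representatives), $R^* - R = p U$ for some $U \in \Rc_{n, K'}(\R^n)$, so $\tilde\theta_{R^*} - \tilde\theta_R \in p\Z$ a.e.; combined with both functions lying in $(-p/2, p/2]$, this forces $R^* = R$, which is therefore integral.

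The main obstacle is obtaining the uniform bound on $\M(\partial R_j)$: a priori the mass of a rectifiable current can exceed its mass modulo $p$ by an arbitrary factor, but the constraint $|\tilde\theta_{R_j}| \leq p/2$ confines the jumps of $\tilde\theta_{R_j}$ to the finite range $[-(p-1), p-1]$, yielding the linear bound by $p-1$. The polyhedral structure is what makes this pointwise comparison of multiplicities rigorous; a direct argument working only with the rectifiable current $R$ seems delicate since $\partial R$ is a priori only a flat chain, not necessarily a measure.
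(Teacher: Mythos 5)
Your proof is correct and follows essentially the same route as the paper (which in turn sketches White's argument): reduce to polyhedral chains via Theorem~\ref{p_poly_app}, exploit the codimension-zero structure to bound the mass of the boundary of a select polyhedral representative by $(p-1)$ times its mod-$p$ boundary mass, and pass to the limit via the Compactness Theorem~\ref{compactness:thm}. You merely inline Proposition~\ref{S_n} rather than isolating it as a lemma, and you spell out the ``reduce to the polyhedral case'' step --- the identification $R^* = R$ via uniqueness of the select representative --- which the paper's sketch of Proposition~\ref{S_n} leaves implicit.
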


The \emph{select} representative modulo $p$ of a rectifiable current $T=\llbracket E, \tau, \theta \rrbracket$ is the unique $T'=\llbracket E, \tau, \theta' \rrbracket$ representative modulo $p$ of $T$ with multiplicity $\theta' \in \left( - \frac{p}{2}, \frac{p}{2} \right]$.

White's proof relies on the following:
\begin{proposition} \label{S_n}
If $T \in \Rc_{n,K}(\R^n)$ is a select representative modulo $p$, then
\begin{equation} \label{claim}
\M(\partial T) \leq (p - 1) \M^{p}(\partial T).
\end{equation}
\end{proposition}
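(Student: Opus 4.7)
Since the inequality is trivial when $\M^p(\partial T) = +\infty$, we may assume $\M^p(\partial T) < \infty$, which by Theorem \ref{p_b_rect:thm} means that $[T] \in \I_{n, K}^{p}(\R^n)$. The plan is to approximate $[T]$ by polyhedral chains that are themselves select representatives modulo $p$, verify the estimate on such polyhedral chains by a face-by-face pointwise computation, and then pass to the limit. The main obstacle will be that Theorem \ref{p_poly_app} only provides convergence in the flat norm modulo $p$, not in the classical flat norm needed to invoke lower semicontinuity of $\M$ on $\partial T$; this difficulty will be overcome by exploiting that $T$ lives in top dimension.

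The key pointwise estimate is the following elementary arithmetic fact: for any integer $k \in [-(p-1), p-1]$, if $\tilde{k}$ denotes the unique integer in $(-p/2, p/2]$ with $\tilde{k} \equiv k \pmod{p}$, then
\[
|k| \leq (p-1)|\tilde{k}|.
\]
Since $|\tilde{k}| = \min(|k|, p - |k|)$, the verification splits into two cases: if $|k| \leq p/2$, the inequality reduces to $|k| \leq (p-1)|k|$, which holds because $p \geq 2$; if $|k| > p/2$, then $|\tilde{k}| = p - |k| \geq 1$, so $(p-1)|\tilde{k}| \geq p - 1 \geq |k|$.

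Next, let $P \in \mathscr{P}_n(\R^n)$ be a polyhedral $n$-chain that is a select representative modulo $p$; any polyhedral chain can be put in this form without changing $[P]$, $\M^p(P)$, or $\M^p(\partial P)$, by reducing each of its integer coefficients to the representative in $(-p/2, p/2]$. Write $P = \sum_i a_i \llbracket \sigma_i \rrbracket$ with $a_i \in (-p/2, p/2] \cap \Z$ and pairwise interior-disjoint $n$-simplices $\sigma_i$ forming a polyhedral subdivision. On each interior $(n-1)$-face shared by $\sigma_i$ and $\sigma_j$, the coefficient of $\partial P$ is, up to sign, $a_i - a_j$, while that of the select representative modulo $p$ of $\partial P$ is $\widetilde{a_i - a_j}$; exterior $(n-1)$-faces correspond to the case $a_j = 0$. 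Summing the pointwise estimate against $\Ha^{n-1}$ over all faces yields $\M(\partial P) \leq (p-1) \M^p(\partial P)$.

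To conclude, fix a compact $K'$ with $\spt^{p}([T]) = \spt(T) \subset \mathrm{int}\, K'$. Theorem \ref{p_poly_app}, combined with the reduction above, produces select polyhedral $n$-chains $P_j$ with $\spt(P_j) \subset K'$, $\Fl^{p}_{K'}(T - P_j) \to 0$, and $\M^p(\partial P_j) \to \M^p(\partial T)$. It remains to upgrade this $\Fl^p_{K'}$-convergence to classical $\Fl_{K'}$-convergence. Codimension zero plays the essential role here: writing $T - P_j = R_j + \partial S_j + p Q_j$ as in the definition of $\Fl^p_{K'}$, the $(n+1)$-current $S_j$ vanishes identically in $\R^n$, so $T - P_j = R_j + p Q_j$ with $R_j, Q_j \in \Rc_{n,K'}(\R^n)$ and $\M(R_j) \to 0$. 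Identifying each of these top-dimensional currents with its integer multiplicity function on $\R^n$, and using that both $T$ and $P_j$ are select to get $|T - P_j| \leq p - 1$ pointwise, one finds $\{Q_j \neq 0\} \subset \{|R_j| \geq 1\}$, so $\Ha^n(\{Q_j \neq 0\}) \leq \M(R_j)$, and finally $\M(T - P_j) \leq \M(R_j) + (p-1)\,\Ha^n(\{Q_j \neq 0\}) \leq p\, \M(R_j) \to 0$. Hence $P_j \to T$ in the flat norm on $\F_{n, K'}$, which implies $\partial P_j \to \partial T$ in the flat norm on $\F_{n-1, K'}$, and lower semicontinuity of $\M$ yields $\M(\partial T) \leq \liminf_j \M(\partial P_j) \leq (p-1) \M^p(\partial T)$.
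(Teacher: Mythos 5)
Your proof is correct, and it follows the outline the paper attributes to White: reduce to polyhedral chains via the polyhedral approximation theorem modulo $p$ (Theorem \ref{p_poly_app}), then exploit the codimension-zero structure of boundary faces. The paper only sketches this route; the part that actually requires care, and which the sketch glosses over, is how to carry the polyhedral estimate back to $T$, since Theorem \ref{p_poly_app} only provides $\Fl^p_{K'}$-convergence while lower semicontinuity of $\M$ is available for classical $\Fl_{K'}$-convergence. Your observation that in top dimension the $(n+1)$-dimensional $S_j$ in the definition of $\Fl^p_{K'}$ must vanish, so that $T-P_j=R_j+pQ_j$, and that the pointwise bound $|T-P_j|\leq p-1$ for two select representatives forces $\{Q_j\neq 0\}\subset\{|R_j|\geq 1\}$ and hence $\M(T-P_j)\leq p\,\M(R_j)\to 0$, is exactly the right way to upgrade the convergence and is arguably the genuinely non-trivial content of the reduction. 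The arithmetic lemma $|k|\leq(p-1)|\tilde k|$ for $|k|\leq p-1$ correctly formalizes the face-by-face comparison implicit in the paper's sketch. No gaps.
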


We sketch the proof of Theorem \ref{White}, having shown Proposition \ref{S_n}. Take $\left[ T \right] \in \I_{n,K}^{p}(\R^n)$, and let $T'$ be the unique select representative modulo $p$ of $T$. A priori, $T'$ is just an integer rectifiable current. On the other hand, since $\left[ T \right]$ is integral, $\M^{p}(\partial T)$ is finite by \eqref{p_mass_chain}. Then Proposition \ref{S_n} implies that $\M(\partial T')$ is finite. Hence, $T'$ is integral because of \eqref{b_rect}.\\

Unfortunately, in order to carry on the argument that White uses to prove Proposition \ref{S_n}, the codimension $0$ assumption is indispensable. The idea is the following. Firstly, Theorem \ref{p_poly_app} allows one to reduce the problem to the case of polyhedral chains. Now, for any given polyhedral chain $T$ which is a select representative modulo $p$ one writes $T = \llbracket \R^n, \mathbf{e_n}, \theta \rrbracket$, where $\mathbf{e_n}$ is the constant standard orientation of $\R^n$ and $\theta$ is a summable, piecewise constant, integer-valued function with values in $\left( - \frac{p}{2}, \frac{p}{2} \right]$. Then, White makes the following key observation: since the codimension is $0$, if $Z$ is a polyhedron in $\partial T$ then for $\Ha^{n-1}$-a.e. $x\in Z$ the multiplicity at $x$ is the difference of the values that the function $\theta$ takes on the two sides of $Z$ (with the correct sign), whose absolute value is in fact bounded by $p-1$ (because $T$ is a select representative modulo $p$).\\

In the next section, we will show that the validity of a statement like the one in Proposition \ref{S_n} is in fact the key not only for giving an affirmative answer to Question \ref{pb2}, but also for positively answering Question \ref{pb1}. Furthermore, we will answer Question \ref{pb2} in dimension $m=1$.


\section{Main results} \label{S4}

In this section, we will further analyze Questions \ref{pb1} and \ref{pb2}. First, we point out that the two questions are, in fact, connected. 

\subsection{Connection between Questions \ref{pb1} and \ref{pb2}.}

For every $K \subset \R^n$ compact, consider the following family of statements $\mathcal{S}_{m}$, for $m = 1,\dots, n$.

\begin{taggedtheorem}{$\mathcal{S}_{m}$} \label{S_k}
There exists a constant $C = C(m,n,p,K)$ with the following property. For any $\left[ S \right] \in \Rc_{m,K}^{p}(\R^n)$ there exists a current $\tilde{S} \in \Rc_{m,K}(\R^n)$ with $\tilde{S} = S \, \modp$ and such that
\[
\M(\partial \tilde{S}) \leq C \M^{p}(\partial S).
\]
\end{taggedtheorem}

Using Theorem \ref{p_poly_app}, it is easy to see that the validity of Statement \ref{S_k} follows from the validity of a slightly stronger property for polyhedral chains, which, on the other hand, might be easier to check.

\begin{taggedtheorem}{$\mathcal{P}_{m}$} \label{S_k_poly}
There exists a constant $C = C(m,n,p)$ \emph{independent of $K$} with the following property. For any $P \in \mathscr{P}_{m}(\R^n)$ with $\spt(P) \subset K$, there exists a current $\tilde{P} \in \mathscr{P}_{m}(\R^n)$, with $\tilde{P} = P \, \modp$ and $\spt(\tilde P) \subset K$ such that 
\[
\M(\partial \tilde{P}) \leq C \M^{p}(\partial P);\hspace{0.5cm} \M(\tilde{P}) \leq C \M^{p}(P).
\]
\end{taggedtheorem}

\begin{proposition}\label{equiv_statements}
The validity of Statement $\mathcal{P}_{m}$ implies that of Statement $\mathcal{S}_{m}$.
\end{proposition}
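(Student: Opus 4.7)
My plan is to approximate $S$ by polyhedral chains via Theorem \ref{p_poly_app}, apply Statement $\mathcal{P}_m$ to each approximant to obtain mass and boundary-mass estimates uniform in the approximation parameter, and then extract a flat-convergent subsequence using the Compactness Theorem \ref{compactness:thm}. The key leverage is that the constant $C$ in $\mathcal{P}_m$ is independent of the underlying compact set, so I can work inside a nested family of compact neighborhoods $K_k := \{x \in \R^n : d(x,K) \leq 1/k\}$ shrinking down to $K$ without the estimates degenerating.

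First, I would reduce to the case $\M^p(\partial S) < \infty$ (otherwise $\tilde S = S$ trivially works), whence $[S] \in \I_{m,K}^p(\R^n)$ by Theorem \ref{p_b_rect:thm} and polyhedral approximation modulo $p$ becomes available. Since $\spt^p([S]) \subset K \subset \mathrm{int}(K_k)$, Theorem \ref{p_poly_app} produces $P_k \in \mathscr{P}_m(\R^n)$ with $\spt(P_k) \subset K_k$, $\Fl_{K_k}^p(S - P_k) < 1/k$, and $\M^p(P_k), \M^p(\partial P_k)$ within $1/k$ of $\M^p(S), \M^p(\partial S)$. Applying Statement $\mathcal{P}_m$ with compact set $K_k$ then yields $\tilde P_k \in \mathscr{P}_m(\R^n)$ with $\spt(\tilde P_k) \subset K_k$, $\tilde P_k = P_k \modp$ in $\F_{m,K_k}(\R^n)$, and $\M(\tilde P_k) \leq C(\M^p(S) + 1)$, $\M(\partial \tilde P_k) \leq C(\M^p(\partial S) + 1)$. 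All of these are integral currents supported in the single compact $K_1$ with uniformly bounded total mass, so Theorem \ref{compactness:thm} furnishes a subsequence $\tilde P_{k_j} \to \tilde S$ in $\Fl_{K_1}$ for some $\tilde S \in \I_{m,K_1}(\R^n)$, and lower semicontinuity of mass immediately gives $\M(\partial \tilde S) \leq C\,\M^p(\partial S)$.

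The subtlest point I anticipate is pinning down the support of the limit: we need $\spt(\tilde S) \subset K$, not merely $\spt(\tilde S) \subset K_1$. I would argue as follows: for any smooth compactly supported form $\omega$ with $\omega \equiv 0$ on $K$, the Lipschitz continuity of $\omega$ gives $|\omega(x)| \leq \Lip(\omega) \cdot d(x,K) \leq \Lip(\omega)/k_j$ on $K_{k_j}$, so $|\langle \tilde P_{k_j}, \omega \rangle| \leq \M(\tilde P_{k_j}) \Lip(\omega)/k_j \to 0$; since flat convergence in $\Fl_{K_1}$ implies weak convergence of currents, $\langle \tilde S, \omega \rangle = 0$, forcing $\spt(\tilde S) \subset K$.

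Finally, I would verify $\tilde S = S \modp$ in $\F_{m,K}(\R^n)$. Using $\Fl_{K_1}^p \leq \Fl_{K_k}^p$ (since $K_k \subset K_1$), the identity $\tilde P_{k_j} = P_{k_j} \modp$ in $\F_{m,K_{k_j}}$ together with $\Fl_{K_1}^p(\tilde S - \tilde P_{k_j}) \leq \Fl_{K_1}(\tilde S - \tilde P_{k_j}) \to 0$ and $\Fl_{K_{k_j}}^p(P_{k_j} - S) < 1/k_j$ force $\Fl_{K_1}^p(\tilde S - S) = 0$. By the proposition characterizing rectifiable currents equivalent to zero modulo $p$, this yields $\tilde S - S = pQ$ for some $Q \in \Rc_{m,K_1}(\R^n)$; since $\spt(\tilde S - S) \subset K$, we in fact have $Q \in \Rc_{m,K}(\R^n)$, and hence $\tilde S = S \modp$ in $\F_{m,K}(\R^n)$, completing the proof.
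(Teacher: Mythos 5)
Your proposal is correct and follows essentially the same route as the paper: approximate by polyhedral chains via Theorem~\ref{p_poly_app} on the shrinking neighborhoods $K_k$, apply $\mathcal{P}_m$ to get uniform mass bounds, pass to a flat-convergent subsequence via the Compactness Theorem, use lower semicontinuity of mass, then control the support of the limit and verify equivalence modulo $p$. The only cosmetic difference is in the support step: you use a quantitative Lipschitz bound $|\omega|\leq \Lip(\omega)/k_j$ on $K_{k_j}$ to show $\langle \tilde P_{k_j},\omega\rangle\to 0$ directly, whereas the paper fixes a point $x\notin K$, picks $\ell$ with $x\notin K_{j_\ell}$, and uses that $\langle\tilde P_{j_h},\omega\rangle = 0$ exactly for all $h\geq\ell$ when $\omega$ vanishes on $K_{j_\ell}$; both are valid and equally elementary.
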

\begin{proof}
Let $\left[ S \right] \in \Rc_{m,K}^{p}(\R^n)$. We can assume that $\M^p(\left[ \partial S \right])$ is finite, otherwise the conclusion of Statement $\mathcal{S}_{m}$ is trivial. By Theorem \ref{p_poly_app}, for every $j=1,2,\ldots$ there exists $P_j\in \mathscr{P}_{m}(\R^n)$ such that, denoting 
$$K_j:=\left\lbrace x\in\R^n:{\rm{dist}}(x,K)\leq\frac{1}{j}\right\rbrace,$$ 
one has $\spt(P_{j}) \subset K_{j}$ and
\begin{equation} \label{e1}
\Fl_{K_j}^{p}(S - P_j) < \frac{1}{j}, \hspace{0.5cm}
\M^{p}(P_j) \leq \M^{p}(S) + \frac{1}{j}, \hspace{0.5cm} \M^{p}(\partial P_j) \leq \M^{p}(\partial S) + \frac{1}{j}.
\end{equation}
Now, by Statement $\mathcal{P}_{m}$ there exist a constant $C$ (which does not depend on $j$) and a sequence $\{\tilde P_j\}$ of polyhedral chains with $\tilde P_j = P_j \, \modp$ and $\spt(\tilde P_j)\subset K_j$ such that 
\[
\M(\partial \tilde P_j) \leq C \M^{p}(\partial P_j);\hspace{0.5cm} \M(\tilde{P_j}) \leq C \M^{p}(P_j).
\]
Combining this with \eqref{e1}, we get
\[ 
\sup_{j \geq 1} \{ \M(\tilde P_j) + \M(\partial \tilde P_j) \} \leq C(\M^p(S)+\M^p(\partial S)+2)< \infty.
\]
Then, by the Compactness Theorem \ref{compactness:thm} there exist $\tilde S \in \I_{m,K_1}(\R^n)$ and a subsequence $\{\tilde P_{j_h}\}$ such that
\begin{equation}\label{e2}
\lim_{h \to \infty} \Fl_{K_1}(\tilde S - \tilde P_{j_h}) = 0.
\end{equation}
Moreover by the lower semi-continuity of the mass, it holds
$$\M(\partial \tilde{S}) \leq C \M^{p}(\partial S);\hspace{0.5cm} \M(\tilde{S}) \leq C \M^{p}(S)$$
and we claim that $\spt (\tilde S)\subset K$. Indeed, take $x\in\R^n\setminus K$. We will prove that there exists a closed set $C$ such that $x\not\in C$ and $\langle \tilde S,\omega\rangle=0$ whenever $\omega\equiv 0$ on $C$, which implies that $x\not\in\spt(\tilde S)$. Fix $\ell$ such that $x\not\in K_{j_\ell}$ and let $C:=K_{j_\ell}$. Let $\omega$ be an $m$-form with $\omega\equiv 0$ on $C$. Since for every $h\geq\ell$ it holds $\spt(\tilde P_{j_h})\subset C$, we have 
\begin{equation}\label{fava}
\langle \tilde P_{j_h},\omega\rangle=0,\quad \text{for every $h\geq\ell$}.
\end{equation}
On the other hand, by \eqref{e2}, for every $\varepsilon>0$ there exists $h\geq\ell$ such that we can write $\tilde S-\tilde P_{j_h}=R+\partial Q$ for some $R\in\mathscr{R}_{m,K_1}(\R^n)$ and $Q\in\mathscr{R}_{m+1,K_1}(\R^n)$ with $\M(R)+\M(Q)\leq\varepsilon$. Hence it holds
$$\langle \tilde S-\tilde P_{j_h},\omega\rangle = \langle R,\omega\rangle+\langle \partial Q,\omega\rangle\leq\M(R)\|\omega\|_{\infty}+\M(Q)\|d\omega\|_{\infty}\leq\varepsilon(\|\omega\|_{\infty}+\|d\omega\|_{\infty}).$$
Hence by \eqref{fava} $\langle \tilde S,\omega\rangle=0$, which completes the proof of the claim.\\

Finally, we show that $\tilde S=S\,\modp$. To this aim, for every $h=1,2,\ldots$, we compute
$$\Fl^p_{K_1}(\tilde S-S) \leq \Fl^p_{K_1}(\tilde S - \tilde P_{j_h})+\Fl^p_{K_1}(\tilde P_{j_h}-S)\leq \Fl_{K_1}(\tilde S - \tilde P_{j_h})+\Fl^p_{K_{{j_h}}}(\tilde P_{j_h}-S),$$
which by \eqref{e1} and \eqref{e2} tends to $0$ when $h$ tends to $\infty$.
\end{proof}

\begin{remark}
It follows from the above proof that if the Statement $\mathcal{P}_{m}$ holds true then the Statement $\mathcal{S}_{m}$ holds true with the same constant $C$. In particular, the constant would not depend on the compact set $K$.
\end{remark}

Clearly, if the Statement $\mathcal{S}_{m}$ is true then every $m$-dimensional integral current modulo $p$ in $K$ has an integral representative in $K$, and thus the answer to Question \ref{pb2} is affirmative in dimension $m$. The next theorem shows that, in fact, the validity of $\mathcal{S}_{m}$ has important consequences on Question \ref{pb1} as well.

\begin{theorem} \label{link:thm}
If $\mathcal{S}_{m}$ holds true, then Question \ref{pb1} has affirmative answer in $\F_{m-1,K}(\R^{n})$.
\end{theorem}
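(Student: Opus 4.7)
The plan is to leverage Statement $\mathcal{S}_m$ to upgrade the approximating decomposition $U = R_j + \partial S_j + pQ_j$ of $U := T - \tilde T$ into one in which the error term can be absorbed in $p$ times a flat chain. By the density of integral currents in flat norm, write $U = R_j + \partial S_j + pQ_j$ with $R_j\in\Rc_{m-1,K}(\R^n)$, $S_j\in\Rc_{m,K}(\R^n)$, $Q_j\in\I_{m-1,K}(\R^n)$ and $\M(R_j)+\M(S_j)\to 0$. Since the class $[U]$ is zero in $\F^{p}_{m-1,K}(\R^n)$, the identity $\partial S_j = U - R_j - pQ_j$ reads $[\partial S_j] = -[R_j]$ in $\F^{p}_{m-1,K}(\R^n)$, hence $\M^p(\partial S_j)\leq \M(R_j)\to 0$. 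Apply $\mathcal{S}_m$ to $[S_j]$ to obtain $\tilde S_j\in\Rc_{m,K}(\R^n)$ with $\tilde S_j = S_j \,\modp$ and $\M(\partial\tilde S_j)\leq C\,\M(R_j)$. Because $\tilde S_j$ and $S_j$ are rectifiable and equivalent modulo $p$, the proposition in Remark \ref{remarkone} produces $W_j\in\Rc_{m,K}(\R^n)$ with $\tilde S_j - S_j = pW_j$. Substitution rearranges the decomposition into
$$U = A_j + pB_j,\qquad A_j := R_j + \partial\tilde S_j\in\Rc_{m-1,K}(\R^n),\qquad B_j := Q_j - \partial W_j\in\F_{m-1,K}(\R^n),$$
with the crucial improvement $\M(A_j)\leq (1+C)\M(R_j)\to 0$.

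The next step is to show that $\{B_j\}$ admits a flat limit. Subtracting two such identities gives $p(B_j - B_k) = A_k - A_j$, which is an $(m-1)$-flat chain of finite mass $\leq \M(A_j)+\M(A_k)$, hence rectifiable by Theorem \ref{mass_chain:thm}. Since $p(B_j-B_k)\equiv 0\,\modp$, applying Remark \ref{remarkone}'s proposition once more (together with the torsion-freeness of the ambient group of currents) shows that each difference $B_j - B_k$ lies in $\Rc_{m-1,K}(\R^n)$ and satisfies $\M(B_j - B_k) = \tfrac{1}{p}\M(A_k - A_j) \to 0$. Thus $\{B_j - B_1\}$ is Cauchy in mass; by flat-norm completeness of $\F_{m-1,K}(\R^n)$ together with lower semi-continuity of $\M$ and Theorem \ref{mass_chain:thm}, it converges in flat norm to some $B^{\infty}\in\Rc_{m-1,K}(\R^n)$.

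Finally, from $pB_j = U - A_j$ and $\M(A_j)\to 0$ one gets $pB_j\to U$ in flat norm, whereas $pB_j = pB_1 + p(B_j - B_1)\to p(B_1 + B^{\infty})$ in flat norm as well. Comparing these two limits gives $U = p(B_1 + B^{\infty})$, so $Q := B_1 + B^{\infty}\in\F_{m-1,K}(\R^n)$ is the desired flat chain. The main delicate point is the passage from Statement $\mathcal{S}_m$ to the rewritten decomposition $U = A_j + pB_j$: one must verify that the corrections $pW_j$ (arising because $\mathcal{S}_m$ only yields $\tilde S_j = S_j\,\modp$) can be reabsorbed into the integral factor so that both $A_j$ remains a genuine rectifiable current of small mass and $B_j$ remains a flat chain in $K$. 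Once this is in place, the rest is powered by torsion-freeness of currents and the completeness of rectifiable currents in mass norm.
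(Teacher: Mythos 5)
Your proposal is correct and follows essentially the same route as the paper: you use $\mathcal{S}_m$ to replace $S_j$ with $\tilde S_j$ having controlled boundary mass, rewrite $T-\tilde T$ as a small-mass rectifiable piece plus $p$ times a flat chain, and then invoke Cauchy-in-mass plus flat-norm completeness. The only differences are cosmetic (you skip the decomposition of $R_j$ into a representative modulo $p$ plus $pV_j$, which is indeed unnecessary, and you spell out more explicitly — via the rectifiability of $p(B_j-B_k)$ and torsion-freeness — why Cauchy-ness of $\{pB_j\}$ in mass transfers to $\{B_j\}$, a point the paper treats more tersely).
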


\begin{proof}
It is sufficient to prove that if $T \in \F_{m-1,K}(\R^n)$ is a flat $(m-1)$-chain such that $T = 0 \, \modp$ in $\F_{m-1,K}(\R^n)$, then $T=pQ$ for some $Q\in\F_{m-1,K}(\R^n)$. Let ${\{ R_{j} \} \subset \Rc_{m-1,K}(\R^n)}$, $\{ S_{j} \} \subset \Rc_{m,K}(\R^n)$ and $\{ Q_{j} \} \subset \I_{m-1,K}(\R^n)$ be such that
\begin{equation} \label{link:1}
T = R_{j} + \partial S_{j} + p Q_{j} \hspace{0.5cm} \forall j 
\end{equation}
and 
\begin{equation} \label{link:2}
\lim_{j \to \infty} \left( \M(R_j) + \M(S_j) \right) = 0.
\end{equation}
Conditions \eqref{link:1} and \eqref{link:2} are equivalent to say that the currents $p Q_{j}$ converge to $T$ in flat norm $\Fl_{K}$. We want to conclude from this that $T = p Q$ for some $Q \in \F_{m-1,K}(\R^n)$. In other words, we are looking for a result of closedness of the currents of the form $p Q$ with respect to flat convergence. Now, observe the following. For every $j$, the current $R_{j}$ is rectifiable. Therefore, we can write 
\begin{equation} \label{link:3}
R_{j} = \tilde{R}_{j} + p V_{j},
\end{equation}
with $V_j \in \Rc_{m-1,K}(\R^n)$ and $\tilde{R}_j$ representative modulo $p$. In particular, this implies that
\begin{equation} \label{link:4}
\M(\tilde{R}_{j}) = \M^{p}(R_j) \leq \M(R_j) \to 0.
\end{equation}
Also the currents $S_{j}$ are rectifiable, and of dimension $m$. Since $\mathcal{S}_{m}$ holds true, for every $j$ we can let $\tilde{S}_{j}$ be the representative of $\left[ S_{j} \right]$ given in there, so that
\begin{equation} \label{link:4bis}
S_{j} = \tilde{S}_{j} + p Z_{j}
\end{equation}
with $\tilde{S}_{j}, Z_{j} \in \Rc_{m,K}(\R^n)$, and
\begin{equation} \label{link:5}
\M(\partial \tilde{S}_{j}) \leq C(m,n,p,K) \M^{p}(\partial S_{j}).
\end{equation}
Now, since $\M^{p}(T - p Q_j) = \M^{p}(T) = 0$ for every $j$ and $\M^{p}(R_j) \to 0$, we deduce from \eqref{link:1} that also $\M^{p}(\partial S_j) \to 0$, and therefore also $\M(\partial \tilde{S}_{j}) \to 0$.

Thus, the above argument produces the following: modulo replacing $Q_j \in \I_{m-1,K}(\R^n)$ with $\tilde{Q}_{j} := Q_{j} + V_{j} + \partial Z_{j} \in \F_{m-1,K}(\R^n)$, we can replace \eqref{link:1} with
\begin{equation} \label{link:6}
T = \tilde{R}_j + \partial \tilde{S}_j + p \tilde{Q}_j \hspace{0.5cm} \forall j,
\end{equation}
and \eqref{link:2} with the stronger
\begin{equation} \label{link:7}
\lim_{j\to\infty} \left( \M(\tilde{R}_j) + \M(\partial \tilde{S}_j) \right) = 0,
\end{equation}
that is the currents $p \tilde{Q}_{j}$ are now approximating $T$ \emph{in mass}.

The problem, now, reduces to proving that the subset of flat chains in $\F_{m-1,K}(\R^n)$ of the form $pQ$ is closed with respect to convergence in mass: this question, though, is evidently much easier than the previous one, and it turns out to always have affirmative answer. Indeed, let $\{ Q_{j} \}_{j=1}^{\infty} \subset \F_{m-1,K}(\R^n)$ be a sequence of flat chains such that ${\M(T - pQ_{j}) \to 0}$. In particular, this would imply that the sequence $\{pQ_j\}$ is a Cauchy sequence in mass. Therefore, the sequence $\{Q_{j}\}$ is also a Cauchy sequence in mass, and in fact also in the flat norm $\Fl_{K}$, since $\Fl_{K}(T) \leq \M(T)$ for any $T \in \F_{m-1,K}(\R^n)$.\footnote{If $\M(T) = \infty$ there is of course nothing to prove. On the other hand, if $\M(T) < \infty$ then $T$ is integer rectifiable, and hence it is a competitor for the decomposition in the definition of the flat norm.} So, by completeness there is $Q \in \F_{m-1,K}(\R^n)$ such that $\Fl_{K}(Q - Q_j) \to 0$. This also implies $\Fl_{K}(pQ - pQ_j) \to 0$, since $\Fl_{K}(nT) \leq n \Fl_{K}(T)$ in general. So, $pQ$ is a flat limit of the sequence $pQ_j$. By uniqueness of the limit, one therefore has to conclude $T = pQ$.
\end{proof}

\begin{corollary} \label{cod1_cor}
The answer to Question \ref{pb1} is positive for $m = n-1$.
\end{corollary}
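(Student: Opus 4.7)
The plan is to obtain the corollary as an immediate consequence of Theorem \ref{link:thm} applied with $m = n$. By that theorem, it suffices to verify that Statement $\mathcal{S}_{n}$ holds, i.e.\ that every class $[S] \in \Rc_{n,K}^{p}(\R^{n})$ admits a representative $\tilde{S} \in \Rc_{n,K}(\R^{n})$ with $\M(\partial \tilde{S}) \leq C\, \M^{p}(\partial S)$ for some constant $C = C(n,p,K)$.

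First I would recall that in top dimension $m = n$, for any rectifiable current $S \in \Rc_{n,K}(\R^{n})$ there is a uniquely determined \emph{select} representative modulo $p$, namely the $\tilde{S} = \llbracket E, \tau, \theta' \rrbracket$ with multiplicity $\theta'(x) \in \left(-\frac{p}{2},\frac{p}{2}\right]$. By construction $\tilde{S} = S \, \modp$, and from \eqref{representative} one has $\M(\tilde{S}) = \M^{p}(\tilde{S}) = \M^{p}(S)$.

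Next I would invoke White's key estimate, Proposition \ref{S_n}, applied to $\tilde{S}$. This gives
\[
\M(\partial \tilde{S}) \leq (p-1)\, \M^{p}(\partial \tilde{S}) = (p-1)\, \M^{p}(\partial S),
\]
where in the last equality I use that the $p$-mass of the boundary depends only on the equivalence class $\modp$. Hence Statement $\mathcal{S}_{n}$ holds with the explicit constant $C = p - 1$, independent of $K$.

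Finally, having verified $\mathcal{S}_{n}$, Theorem \ref{link:thm} directly yields that Question \ref{pb1} has affirmative answer in $\F_{n-1,K}(\R^{n})$, which is precisely the statement of the corollary. There is no real obstacle here: all the work has been done in Proposition \ref{S_n} and Theorem \ref{link:thm}, and the corollary is simply the combination of these two inputs with the observation that the codimension-zero case of $\mathcal{S}_{m}$ is exactly what White's estimate provides.
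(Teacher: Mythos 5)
Your proof is correct and follows precisely the paper's route: the paper disposes of the corollary with the one-line remark that $\mathcal{S}_{n}$ \emph{is} Proposition \ref{S_n}, and you have simply spelled out why, by taking the select representative modulo $p$ and invoking White's estimate, so that Theorem \ref{link:thm} with $m=n$ gives the conclusion. No substantive difference; yours just makes explicit the step the paper leaves implicit.
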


\begin{proof}
It immediately follows from Theorem \ref{link:thm}, since $\mathcal{S}_{n}$ is Proposition \ref{S_n}.
\end{proof}

\subsection{Answer to Question \ref{pb2} in dimension $m=1$.}

\begin{theorem}\label{t:main}
The answer to Question \ref{pb2} is positive for $m=1$.
\end{theorem}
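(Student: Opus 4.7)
The plan is to establish Statement $\mathcal{S}_{1}$, from which the theorem follows at once by Boundary Rectifiability: given $[T]\in\I_{1,K}^{p}(\R^{n})$, applying $\mathcal{S}_{1}$ to the rectifiable class $[T]\in\Rc_{1,K}^{p}(\R^{n})$ (whose $\M^{p}(\partial T)$ is finite) produces $\tilde{T}\in\Rc_{1,K}(\R^{n})$ with $\tilde{T}=T\,\modp$ and $\M(\partial\tilde{T})\leq C\,\M^{p}(\partial T)<\infty$, whence $\tilde{T}\in\I_{1,K}(\R^{n})$ by Theorem~\ref{b_rect:thm}.

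To prove $\mathcal{S}_{1}$ I would combine the polyhedral approximation of Theorem~\ref{p_poly_app} with a combinatorial surgery on the underlying weighted graph. Let $P\in\mathscr{P}_{1}(\R^{n})$ with $\spt(P)\subset K$, and let $P'$ denote its select representative modulo $p$, whose edge multiplicities lie in $(-p/2,p/2]\cap\Z$; view $P'$ as an oriented multigraph. At each vertex $v$ decompose the boundary multiplicity $n_{v}$ of $\partial P'$ as $n_{v}=pk_{v}+r_{v}$ with $r_{v}\in(-p/2,p/2]$, so that $\sum_{v}|r_{v}|=\M^{p}(\partial P)$. On each connected component $C$ of the graph, $\sum_{v\in C}n_{v}=0$ forces $L_{C}:=\sum_{v\in C}k_{v}$ to satisfy $p|L_{C}|=\bigl|\sum_{v\in C}r_{v}\bigr|\leq\sum_{v\in C}|r_{v}|$. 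Fix a base vertex $v_{0}^{C}\in C$ and construct a polyhedral $1$-chain $Q_{C}$---routed along a spanning tree of $C$ inside $\spt(P')$, say---with
\[
\partial Q_{C}=\sum_{v\in C}(-k_{v})\delta_{v}+L_{C}\,\delta_{v_{0}^{C}}.
\]
Then $\tilde{P}:=P'+p\sum_{C}Q_{C}$ satisfies $\tilde{P}=P\,\modp$, and the boundary multiplicities of $\tilde{P}$ are $r_{v}$ at every $v\neq v_{0}^{C}$ and $r_{v_{0}^{C}}+pL_{C}$ at each base vertex, yielding the key estimate
\[
\M(\partial\tilde{P})\leq\sum_{v}|r_{v}|+p\sum_{C}|L_{C}|\leq 2\,\M^{p}(\partial P).
\]

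To conclude $\mathcal{S}_{1}$ one passes to the limit along a polyhedral approximation $\{P_{j}\}$ of $[S]$, extracting a subsequential $\Fl_{K}$-limit $\tilde{S}$ by the Compactness Theorem~\ref{compactness:thm} in the spirit of Proposition~\ref{equiv_statements}, and checking $\tilde{S}=S\,\modp$ via the $\Fl^{p}_{K}$-convergence $\tilde{P}_{j}=P_{j}\to S$. The main obstacle I anticipate is obtaining the uniform bound on the classical mass $\M(\tilde{P}_{j})$ required for compactness: while $\M(\partial\tilde{P}_{j})\leq 2\,\M^{p}(\partial P_{j})$ is automatic, the increment $\M(Q_{j})$ is governed by $\sum_{v}|k_{v}|$, and vertices with $r_{v}=0$ but $k_{v}\neq 0$ prevent a naive bound by $\M^{p}(\partial P_{j})$. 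Overcoming this uses the specifically one-dimensional structure---for instance, that at any degree-$2$ vertex one has $|k_{v}|\leq 1$, combined with a path/cycle decomposition of $P'$ which allows $\sum|k_{v}|$ to be bounded in terms of $\M^{p}(\partial P_{j})$ and the geometry of $K$---and is where the dimension-$1$ hypothesis enters essentially.
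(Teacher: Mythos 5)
Your reduction of the theorem to Statement $\mathcal{S}_{1}$ (really, $\mathcal{P}_{1}$, since you also want the compactness machinery of Proposition~\ref{equiv_statements}) is the right framing, and your boundary estimate $\M(\partial\tilde{P})\leq\sum_{v}|r_{v}|+p\sum_{C}|L_{C}|\leq 2\,\M^{p}(\partial P)$ is correct. But the construction itself has a gap that you yourself flag and then do not close: adding the cone-like corrections $p\sum_{C}Q_{C}$ destroys control on $\M(\tilde{P})$. The quantity $\M(Q_{C})$ is not governed only by $\sum_{v}|k_{v}|$ but also by the \emph{lengths} of the paths along which you route the flow, so even a bound on $\sum_{v}|k_{v}|$ would at best produce a constant depending on $\mathrm{diam}(K)$---ruled out for $\mathcal{P}_{1}$, whose constant must be independent of $K$, and still needed uniformly along the exhaustion $K_{j}$ in the compactness step. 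Worse, $\sum_{v}|k_{v}|$ is genuinely not controlled by $\M^{p}(\partial P)$: a vertex of high degree can carry $|n_{v}|$ up to $d_{v}p/2$ while $r_{v}=0$, so contributing to $k_{v}$ but not to $\M^{p}(\partial P)$; the degree-$2$ heuristic does not extend to polyhedral chains with arbitrary branching. The appeal to ``a path/cycle decomposition'' plus ``the geometry of $K$'' is a placeholder, not an argument, and it is precisely the hard part.

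The paper sidesteps this entirely by performing surgery \emph{on the existing edges} rather than adding new $1$-chains. Crucially, it does not use the select representative with multiplicities in $(-p/2,p/2]$, but a representative $Q$ with multiplicities in $\{1,\ldots,p-1\}$, i.e.\ all of the same sign. With positive multiplicities, any vertex $z$ where $\partial Q$ has multiplicity $\leq -p$ admits (Lemma~\ref{l:chain}) a chain of distinct oriented segments of $Q$ from $z$ to a vertex of positive boundary multiplicity. Flipping the orientation of those segments and replacing $\theta_{i}$ by $p-\theta_{i}$ keeps all edge multiplicities in $\{1,\ldots,p-1\}$---so $\M(\tilde{P})\leq(p-1)\M^{p}(P)$ is automatic and unchanged by the surgery---while $\M(\partial\,\cdot\,)$ drops by at least $2$. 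Iterating terminates, and the final chain has boundary multiplicities bounded by $p-1$, giving $\M(\partial\tilde{P})\leq(p-1)\M^{p}(\partial P)$ with a constant depending only on $p$. The idea you are missing is this pair of moves: replace the symmetric (select) representative by the one-signed representative, and reduce boundary mass by orientation/multiplicity flips along paths inside $\spt(P)$ instead of by adding external coning chains.
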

In the proof, we will use the following elementary fact.
\begin{lemma}\label{l:chain} Let $P \in \mathscr{P}_{1}(\R^n)$ have positive multiplicities. Let $z$ be a point in $\spt(\partial P)$. Then one can select a finite sequence of oriented segments $S_1,\ldots, S_N$ supported in the support of  $P$ such that: 
\begin{enumerate}
\item the orientation of each segment $S_i$ coincides with the orientation of $P$ on $S_i$;
\item the second extreme of $S_{i}$ coincides with the first extreme of $S_{i+1}$, for $i=1,\ldots, N-1$;
\item If the multiplicity of $\partial P$ at $z$ is negative, then the first extreme of $S_1$ is $z$ and the second extreme of $S_N$ is a point $x$ of the support of $\partial P$ with positive multiplicity. Vice versa, if the multiplicity of $\partial P$ at $z$ is positive, then the first extreme of $S_1$ is a point $x$ of the support of $\partial P$ with negative multiplicity and the second extreme of $S_N$ is $z$;
\item $S_i\neq S_j$ for $i\neq j$.
\end{enumerate}
\end{lemma}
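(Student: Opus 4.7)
The plan is to realize $P$ as a flow on a finite directed weighted graph and apply a standard reachability/flux counting argument.

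First, I would subdivide (and, where necessary, combine collinear segments with opposite orientations) in order to write $P=\sum_{i=1}^{L} m_i \llbracket \sigma_i\rrbracket$, where the $\sigma_i$ are closed oriented segments with positive integer weights $m_i$ such that any two distinct $\sigma_i,\sigma_j$ meet, if at all, only at common endpoints. Let $V$ be the finite set of endpoints and let $G=(V,E)$ be the directed graph with edge set $E=\{\sigma_1,\ldots,\sigma_L\}$, each edge oriented as in $P$. For each $v\in V$, the multiplicity of $\partial P$ at $v$ equals
\[
\theta(v):=\sum_{\sigma_i \text{ ends at } v} m_i \;-\; \sum_{\sigma_i \text{ starts at } v} m_i,
\]
and $\theta(v)\neq 0$ precisely when $v\in\spt(\partial P)$.

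Assume first that $\theta(z)<0$, and let $A\subseteq V$ be the set of vertices reachable from $z$ by a directed walk in $G$, so $z\in A$. By the very definition of $A$, no edge of $G$ can start inside $A$ and end outside $A$ (for otherwise its endpoint would lie in $A$). Summing $\theta$ over $A$, edges with both endpoints in $A$ cancel, edges with neither in $A$ give zero, and edges entering $A$ contribute $+m_i$, so
\[
\sum_{v\in A}\theta(v)=\sum_{\sigma_i \text{ enters } A} m_i \;\ge\; 0.
\]
Since $\theta(z)<0$, there must exist $x\in A\setminus\{z\}$ with $\theta(x)>0$. By construction of $A$ there is a directed walk from $z$ to $x$, and by removing any cycles it may contain we obtain a \emph{simple} directed walk $\sigma_{i_1},\ldots,\sigma_{i_N}$ (no repeated vertices, hence no repeated edges). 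Setting $S_j:=\sigma_{i_j}$ gives a family satisfying $(1)$ by construction of the orientation, $(2)$ because it is a walk, $(3)$ because it begins at $z$ and ends at a vertex of positive multiplicity in $\partial P$, and $(4)$ because the walk is simple.

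If instead $\theta(z)>0$, I would apply the previous argument to the chain with all orientations reversed (equivalently, to the reversed graph $G^{\mathrm{op}}$), in which $z$ has boundary multiplicity $-\theta(z)<0$. This produces a simple directed walk in $G^{\mathrm{op}}$ from $z$ to some $y$ with $-\theta(y)>0$, whose reversal is a simple directed walk in $G$ from $y$ (a negative-multiplicity boundary point of $\partial P$) to $z$, fulfilling condition $(3)$ in the remaining case. The only genuine obstacle is the bookkeeping in the initial reduction: one must check that after subdividing at all crossings and cancelling antiparallel pairs, the resulting formal sum still represents $P$ and retains positive coefficients. Once this setup is in place, the rest is a purely combinatorial reachability argument that does not require any of the deeper machinery of currents.
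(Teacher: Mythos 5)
Your proof is correct, and it takes a genuinely different route from the paper's. The paper argues by an iterative ``peeling'' procedure: since $\partial P$ has negative multiplicity at $z$ and all multiplicities of $P$ are positive, at least one segment $S_1$ leaves $z$, and removing $\llbracket S_1\rrbracket$ strictly decreases the mass while pushing the ``deficit'' forward to the head $y$ of $S_1$; one iterates until a positive-multiplicity boundary point is reached, with termination guaranteed because the masses of admissible segments are bounded below, and cycles are pruned at the end. Your argument instead sets up the finite directed weighted graph associated to $P$ and runs a global conservation-of-flux count over the reachable set $A$ from $z$: since no edge leaves $A$, the net boundary multiplicity summed over $A$ is nonnegative, and the strict negativity at $z$ forces a vertex $x\in A$ with positive boundary multiplicity, reachable from $z$; pruning cycles yields a simple walk. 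Both are valid; yours replaces the paper's inductive termination bookkeeping with a single clean cut/flux inequality, at the modest cost of the graph-theoretic setup. One small remark on the preamble: given that $P$ is assumed to have positive multiplicities, after subdivision any overlapping pieces must reinforce rather than cancel, so the worry about antiparallel pairs in the reduction step is in fact vacuous---one only needs the standard subdivision so that segments meet at endpoints.
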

\begin{proof}
Assume without loss of generality that the multiplicity of $\partial P$ at $z$ is negative. Since the multiplicities on $P$ are all positive, then among the (finitely many) segments defining the support of $P$ there is at least a segment $S_1$ whose first extreme is $z$ such that 
\begin{equation}\label{e:spezza_massa}
\M(P)=\M(P - \llbracket S_1 \rrbracket)+\M(\llbracket S_1 \rrbracket),
\end{equation}
If the second extreme $y$ of $S_1$ is not a point with positive multiplicity of $\partial P$, it is a point of negative multiplicity of $\partial (P-\llbracket S_1 \rrbracket)$, hence the procedure can be repeated with $P-\llbracket S_1 \rrbracket$ in place of $P$ and $y$ in place of $z$. The procedure has to terminate in a finite number of steps, because of \eqref{e:spezza_massa} and the fact that the mass of each $\llbracket S_{i} \rrbracket$ is bounded from below. When the procedure ends, one can easily see that the ordered sequence of segments collected satisfies properties $(1)-(3)$. Property $(4)$ is not necessarily satisfied. If a certain segment $S'$ is repeated in the procedure, it is sufficient to eliminate from the sequence one copy of $S'$ and all the segments appearing between two repetitions of $S'$. After this elimination, the sequence satisfies also property $(4)$.  
\end{proof}

\begin{proof}[Proof of Theorem \ref{t:main}] By Proposition \ref{equiv_statements} it is sufficient to prove Statement $\mathcal{P}_1$. Consider $P\in \mathscr{P}_1(\R^n)$.
Firstly we choose a representative $Q\in \mathscr{P}_1(\R^n)$ modulo $p$ of $P$ with multiplicities in $\{1,\ldots,p-1\}$. Clearly we have $\M(Q)\leq (p-1)\M^p(P)$, but at the moment we have no control on $\M(\partial Q)$. Hence, we want to replace $Q$ with another representative $\tilde P\in \mathscr{P}_1(\R^n)$ of $P$, for which we can control both the mass and the mass of the boundary. More precisely, we want to find a representative $\tilde P$ with multiplicities in $\{1,\ldots,p-1\}$ and with the multiplicities of $\partial \tilde P$ in $\{-(p-1),\ldots,p-1\}$.\\

Consider a point $z \in {\rm{spt}}(\partial Q)$ with multiplicity $\theta_{z}$ such that $|\theta_{z}|\geq p$. Without loss of generality, we can assume $\theta_{z} < 0$. Given that the multiplicities on $Q$ are all positive, we can use Lemma \ref{l:chain} to select a finite sequence of oriented segments $S_1,\ldots, S_N$ supported in the support of  $Q$, satisfying properties $(1)-(4)$ (with $Q$ in place of $P$).

Once we have found such a sequence of segments, denote by $Q^1$ the polyhedral current obtained from $Q$ by changing on every segment $S_i$ both the orientation and the multiplicity from $\theta_i$ to $\theta_i^1:=(p-\theta_i)$.
Clearly $Q^1$ has still multiplicities in $\{1,\ldots,p-1\}$. Moreover, if $\theta_z^1$ denotes the multiplicity of $ \partial Q^1$ at $z$ then one has $|\theta^1_z|=|\theta_z|-p$. On the other hand, if $x$ denotes the other endpoint of the chain of segments as in $(3)$ of Lemma \ref{l:chain} and $\theta_x$, $\theta_x^1$ are the multiplicities of $\partial Q$ and $\partial Q^1$ at $x$ respectively, then it holds $\theta_{x}^{1} = \theta_{x} - p$. Now, since by Lemma \ref{l:chain}(3) it holds $\theta_{x} \geq 1$, it follows that $\theta_{x}^{1} = (\theta_{x} - p)  \in \left[ 1 - p, \theta_{x} \right]$. Hence, $|\theta_x^1|\leq|\theta_x|+p-2$.

Therefore, one has
\begin{equation} \label{cade_massa}
\M(\partial Q^1)\leq \M(\partial Q)-2.
\end{equation}
If possible, we repeat the procedure above with $Q^{1}$ in place of $Q$, producing a new polyhedral current $Q^{2}$. By formula \eqref{cade_massa}, the procedure can be iterated only a finite number $M$ of times. The corresponding $\tilde P:=Q^M$ has the required property, because any point $z \in \spt(\partial Q^M)$ has multiplicity $|\theta_{z}| \leq p-1$. Obviously we have 
$$\M(\tilde P)\leq (p-1)\M^p(P)\quad{\rm{and}}\quad \M(\partial \tilde P)\leq (p-1)\M^p(\partial P),$$
and the proof is complete.
\end{proof}

Since we have actually proved the Statement $\mathcal{P}_{1}$, it follows from Proposition \ref{equiv_statements} that the Statement $\mathcal{S}_{1}$ holds true. By virtue of Theorem \ref{link:thm}, we can therefore deduce the following
\begin{corollary} \label{dim0_cor}
The answer to Question \ref{pb1} is positive for $m = 0$.
\end{corollary}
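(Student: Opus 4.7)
The plan is to simply chain together the results already assembled in the paper. Theorem \ref{t:main} has just established Statement $\mathcal{P}_{1}$, that is, the polyhedral version of the representative-with-controlled-boundary property in dimension $1$. By Proposition \ref{equiv_statements}, the validity of $\mathcal{P}_{1}$ automatically upgrades to the validity of $\mathcal{S}_{1}$ for an arbitrary compact set $K \subset \R^n$ (in fact with the same constant $C = C(n,p)$, independent of $K$, as noted in the remark following that proposition).

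Once $\mathcal{S}_{1}$ is available, the reduction is immediate: Theorem \ref{link:thm}, applied with $m=1$, asserts that the validity of $\mathcal{S}_{1}$ implies an affirmative answer to Question \ref{pb1} in $\F_{m-1,K}(\R^n) = \F_{0,K}(\R^n)$. This is exactly the statement of the corollary. So the proof consists of at most two lines of forward references.

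There is no real obstacle here; the conceptual work has all been done in Theorem \ref{t:main}, Proposition \ref{equiv_statements}, and Theorem \ref{link:thm}. The only thing one might briefly check before writing the proof is that the dimension bookkeeping in Theorem \ref{link:thm} matches: there, the hypothesis $\mathcal{S}_{m}$ on $m$-dimensional rectifiable currents yields control on $(m-1)$-dimensional flat chains, so setting $m=1$ lands us in $\F_{0,K}(\R^n)$, as required.
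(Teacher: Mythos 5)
Your proposal is correct and matches the paper exactly: the paper's justification for this corollary is precisely the chain $\mathcal{P}_{1}$ (from Theorem \ref{t:main}) $\Rightarrow$ $\mathcal{S}_{1}$ (via Proposition \ref{equiv_statements}) $\Rightarrow$ affirmative answer to Question \ref{pb1} for $m=0$ (via Theorem \ref{link:thm} with $m=1$). Your dimension bookkeeping check is the right thing to verify, and it comes out correctly.
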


\subsection{Negative answer to Question \ref{pb2} in general dimension.}

It is evident that the choice of the compact set $K$ could be crucial for establishing an answer to Question \ref{pb2}. In the spirit of the counterexample suggested by Federer in \cite{Fed69}[4.2.26, p. 426] (see Remark \ref{remarkone} above), we provide a negative answer to the question, proving the existence of a compact subset $K \subset \R^5$ and a current $\left[ T \right] \in \I_{2,K}^{2}(\R^5)$ with $\partial T = 0 \, \moddue$ such that there exists no $I \in \I_{2,K}(\R^5)$ with $I = T \, \moddue$. Nevertheless, for a different choice of a compact $K' \supset K$ we can exhibit an integral current $I' \in \I_{2,K'}(\R^5)$ with $\partial I' = 0$ and $I' = T \, \moddue$.

In what follows, we will let $\mathcal{K}$ be the embedded Klein bottle in $\R^{4}$ (in particular, $\mathcal{K}$ is a non-orientable compact two dimensional surface without boundary in $\R^4$). There exist a closed curve $\gamma$ and an integral current $S := \llbracket \mathcal{K}, \tau, 1 \rrbracket \in \I_{2,\mathcal{K}}(\R^{4})$ such that the set of discontinuity points of $\tau$ coincides with $\gamma$. In particular, $\partial S$ is the integral current $\llbracket \gamma, \tau_{\gamma}, 2 \rrbracket$, $\tau_{\gamma}$ being the orientation of $\gamma$ naturally induced by $\tau$. We let $\left[ S \right] \in \I_{2,\mathcal{K}}^{2}(\R^4)$ be the associated current $\moddue$. In particular, $\partial \left[ S \right] = 0$ and $\M^{2}(\left[ S \right]) = \Ha^{2}(\mathcal{K})$.  We have the following, elementary

\begin{lemma} \label{lem_neg}
There exists a constant $c = c(\mathcal{K})$ with the following property. If $R \in \I_{2,\mathcal{K}}(\R^4)$ is such that $R \in \left[ S \right]$, one has
\[
\M(\partial R) \geq c.
\]
\end{lemma}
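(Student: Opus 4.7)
The plan is to lift $R$ to the orientation double cover of $\mathcal{K}$, which is a torus, and reduce the desired lower bound to an isoperimetric estimate there.

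First, since $R \in [S]$ is integer rectifiable, by the Proposition in Section \ref{S3} (the rectifiable case of Question \ref{pb1}) I would write $R - S = 2Q$ for some $Q \in \Rc_{2,\mathcal{K}}(\R^{4})$. Writing both $S$ and $2Q$ with respect to the orientation $\tau$ of $S$ (defined $\Ha^{2}$-a.e.\ on $\mathcal{K}$), this yields the representation $R = \llbracket \mathcal{K}, \tau, \theta \rrbracket$ with $\theta \colon \mathcal{K} \to \Z$ odd-integer-valued; in particular $|\theta| \geq 1$ $\Ha^{2}$-a.e.\ on $\mathcal{K}$.

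Second, let $\pi \colon \widetilde{\mathcal{K}} \to \mathcal{K}$ be the orientation double cover with deck involution $\sigma$ (an orientation-reversing free isometry); since $\mathcal{K}$ is a Klein bottle, $\widetilde{\mathcal{K}}$ is a torus. Fix a global orientation $\tilde\tau$ of $\widetilde{\mathcal{K}}$ and define $\varepsilon \colon \widetilde{\mathcal{K}} \setminus \pi^{-1}(\gamma) \to \{\pm 1\}$ by $d\pi(\tilde\tau) = \varepsilon\,\tau$. I would lift $R$ to
\[
\widetilde R := \llbracket \widetilde{\mathcal{K}}, \tilde\tau, \tilde\theta \rrbracket, \qquad \tilde\theta(\tilde x) := \varepsilon(\tilde x)\,\theta(\pi(\tilde x)),
\]
and verify three properties: (i) $|\tilde\theta| \geq 1$ $\Ha^{2}$-a.e.\ (immediate from $|\theta| \geq 1$); (ii) $\tilde\theta \circ \sigma = -\tilde\theta$, equivalent to $\varepsilon \circ \sigma = -\varepsilon$, which follows from $\pi \circ \sigma = \pi$ together with $\sigma$ being orientation-reversing; and (iii) $\M(\partial \widetilde R) = 2\M(\partial R)$, which comes from $\pi_{\sharp}\widetilde R = 2R$ together with the fact that $\pi$ is an isometric bijection on any fundamental domain for $\sigma$ and the $\sigma$-invariance of $\partial \widetilde R$.

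Third, I would close by a balanced-set isoperimetric estimate on $\widetilde{\mathcal{K}}$. Set $A^{+} := \{\tilde\theta > 0\}$ and $A^{-} := \{\tilde\theta < 0\}$: property (ii) gives $\sigma(A^{+}) = A^{-}$, and combined with (i) this forces $A^{+} \sqcup A^{-} = \widetilde{\mathcal{K}}$ (up to a null set) and $|A^{+}| = |A^{-}| = \Ha^{2}(\mathcal{K})$. Across the reduced boundary $\partial^{*} A^{+}$, the integer function $\tilde\theta$ jumps between values $\geq 1$ and values $\leq -1$, hence the jump is at least $2$ in absolute value, yielding $\M(\partial \widetilde R) \geq 2\Ha^{1}(\partial^{*} A^{+})$. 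The isoperimetric inequality on the closed Riemannian $2$-manifold $\widetilde{\mathcal{K}}$ produces a constant $c_{0} = c_{0}(\widetilde{\mathcal{K}}) > 0$ such that any measurable set of volume $\tfrac{1}{2}|\widetilde{\mathcal{K}}|$ has perimeter at least $c_{0}$. Combined with (iii) this gives $\M(\partial R) = \tfrac{1}{2}\M(\partial \widetilde R) \geq c_{0}$, and we may take $c = c_{0}$.

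The main obstacle I expect is the orientation bookkeeping in the definition of $\widetilde R$: proving $\varepsilon \circ \sigma = -\varepsilon$ and the mass identity $\M(\partial \widetilde R) = 2\M(\partial R)$ requires careful use of the fact that $\sigma$ is an orientation-reversing free isometry with $\pi \circ \sigma = \pi$. Once these properties of the lift are in place, the isoperimetric closure on the torus is standard.
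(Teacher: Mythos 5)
Your proposal is correct but takes a genuinely different route from the paper. The paper argues by contradiction and compactness: if $\M(\partial R_j)\to 0$ along a sequence in $[S]$, it replaces each $R_j = \llbracket\mathcal{K},\tau,\theta_j\rrbracket$ by $\tilde R_j = \llbracket\mathcal{K},\tau,\mathrm{sign}(\theta_j)\rrbracket$ (a BV truncation that can only decrease the boundary mass), applies the Compactness Theorem to extract a flat limit $\tilde R\in[S]$ with $\partial\tilde R=0$ by lower semicontinuity, and contradicts the non-orientability of $\mathcal{K}$. Your argument is instead direct and quantitative: you lift to the orientation double cover (the torus), observe that $\tilde\theta$ is $\sigma$-antisymmetric and nowhere vanishing, hence $A^{+}=\{\tilde\theta>0\}$ is a half-volume set, and then close with the isoperimetric profile of the torus. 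What your approach buys is an explicit constant $c = c_0(\widetilde{\mathcal{K}})$ rather than a pure existence statement; what the paper's compactness argument buys is that it never needs the double-cover machinery (nor a fundamental-domain bookkeeping) and rests entirely on results already quoted in the paper. The one place in your write-up that deserves to be tightened is step~(iii): from $\pi_\sharp\widetilde R = 2R$ alone one only gets the inequality $2\M(\partial R)\leq\M(\partial\widetilde R)$, which is the \emph{wrong} direction; the equality (or, rather, the needed direction $\M(\partial\widetilde R)\leq 2\M(\partial R)$) does follow, as you indicate, from $\sigma_\sharp(\partial\widetilde R)=\partial\widetilde R$ together with the fact that $\pi_\sharp$ restricted to a fundamental domain is an isometric injection and hence mass-preserving, but one must either choose the fundamental domain so that its boundary is $\|\partial\widetilde R\|$-negligible or argue locally via the rectifiable representation of $\partial\widetilde R$. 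With that detail spelled out, the argument is complete.
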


\begin{proof}
By contradiction, let $\{\alpha_{j}\}_{j=1}^{\infty}$ be a sequence of positive numbers with $\alpha_{j} \searrow 0$, and $\{R_{j}\}_{j=1}^{\infty} \subset \I_{2,\mathcal{K}}(\R^4)$ be such that 
\[
R_{j} \in \left[ S \right] \hspace{0.5 cm} \forall j,
\]
and 
\[
\M({\partial R_{j}}) \leq \alpha_{j}.
\]
We write $R_{j} = \llbracket \mathcal{K}, \tau, \theta_{j} \rrbracket$, and we observe that, since $R_{j} = S \, \moddue$, from \eqref{p_rect} and from the definition of $S$ it follows that
\begin{equation} \label{congruenza}
\theta_{j}(x) \equiv 1 \, ({\rm mod} \, 2) \quad \mbox{for } \Ha^{2}\mbox{-a.e.} \, x \in \mathcal{K}.
\end{equation}

We replace every $R_{j}$ with the integral current $\tilde{R}_{j} = \llbracket \mathcal{K}, \tau, \tilde{\theta}_{j} \rrbracket$, where ${\tilde{\theta}_{j} := {\rm sign}(\theta_{j})}$. Clearly, by \eqref{congruenza} and the definition of $\tilde{\theta}_{j}$, $\tilde{R}_{j} = R_{j} \, \moddue$, and thus $\tilde{R}_{j} \in \left[ S \right]$ for every $j$. Notice, furthermore, that $\M(\tilde{R}_{j}) = \Ha^{2}(\mathcal{K})$ for every $j$, and that 
\begin{equation} \label{red_bdry}
\M(\partial \tilde{R}_{j}) \leq \M(\partial R_{j}) \leq \alpha_{j}.
\end{equation}
In order to show \eqref{red_bdry}, let $U$ be any open set in $\mathcal{K}$ homeomorphic to a two-dimensional disc. Let also $\sigma$ be a fixed continuous orientation on $U$. We have that $R_{j} \mres U = \llbracket U, \sigma, \Theta_{j} \rrbracket$, where $\Theta_{j}$ is the function defined by
\[
\Theta_{j}(x) :=
\begin{cases}
\theta_{j}(x) &\mbox{if } \tau(x) = \sigma(x) \\
- \theta_{j}(x) &\mbox{if } \tau(x) = - \sigma(x).
\end{cases}
\]
As a consequence of \cite[Remark 27.7]{Sim83}, it holds
\[
\M((\partial R_{j}) \mres U) = |D\Theta_{j}|(U),
\]
where $| D \Theta_{j} |$ is the variation of the ${\rm BV}$ function $\Theta_{j}$. Analogously, $\tilde{R}_{j} \mres U = \llbracket U, \sigma, \tilde{\Theta}_{j} \rrbracket$, where $\tilde{\Theta}_{j}$ is the function defined by
\[
\tilde{\Theta}_{j}(x) :=
\begin{cases}
\tilde{\theta}_{j}(x) &\mbox{if } \tau(x) = \sigma(x) \\
- \tilde{\theta}_{j}(x) &\mbox{if } \tau(x) = - \sigma(x).
\end{cases}
\]  
Observe that $\tilde{\Theta}_{j} \equiv {\rm sign}(\Theta_{j})$, and hence
\[
\M((\partial \tilde R_{j}) \mres U) = |D\tilde\Theta_{j}|(U) \leq |D \Theta_{j}|(U) = \M((\partial R_{j}) \mres U),
\]
which completes the proof of \eqref{red_bdry}.

Now, by the Compactness Theorem \ref{compactness:thm} there exists a current $\tilde{R} \in \I_{2,\mathcal{K}}(\R^4)$ and a subsequence (not relabeled) such that
\[
\lim_{j \to \infty} \Fl_{\mathcal{K}}(\tilde{R} - \tilde{R}_{j}) = 0.
\]
Moreover, by the lower semi-continuity of the mass one has $\partial \tilde{R} = 0$. Since the equivalence classes $\moddue$ are closed with respect to the flat convergence, $\tilde{R} \in \left[ S \right]$, which contradicts the fact that $\mathcal{K}$ is not orientable. 
\end{proof}

\begin{remark} \label{rem_neg}
Observe that if $\mathcal{K}_{\lambda}$ is a homothetic copy of $\mathcal{K}$ with homothety ratio $\lambda$, then $c(\mathcal{K}_{\lambda}) = \lambda c(\mathcal{K})$. 
\end{remark}

We finally define the compact set $K \subset \R^{5}$ and the current $\left[ T \right] \in \I_{2,K}^{2}(\R^5)$ as follows. For every $i = 1, 2, \dots$, we let $\Lambda_{i}$ be the homothety on $\R^{4}$ defined by $\Lambda_{i}(x) := \displaystyle \frac{x}{i}$, and $\pi_{i} \colon \R^{4} \to \R^5$ be the isometry $\pi_{i}(x) := \left( \displaystyle \frac{1}{i}, x \right)$. We set
\[
K := \{ 0 \} \cup \bigcup_{i=1}^{\infty} \pi_{i} \circ \Lambda_{i}(\mathcal{K}),
\]
which is evidently compact, and
\[
T := \sum_{i=1}^{\infty} (\pi_{i} \circ \Lambda_{i})_{\sharp}S .
\]

We let $\left[ T \right]$ denote the equivalence class of $T$ modulo $2$. Since $\M^{2}((\pi_{i} \circ \Lambda_{i})_{\sharp}S) = \displaystyle \frac{1}{i^{2}} \Ha^{2}(\mathcal{K})$, then $\left[ T \right]$ is well defined, and in particular $\partial \left[ T \right] = 0$. In the following proposition, we show that the choice of $K$ and $\left[ T \right]$ provides a negative answer to Question \ref{pb2}.

\begin{proposition}
In general, the answer to Question \ref{pb2} is negative.
\end{proposition}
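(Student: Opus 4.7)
The plan is to argue by contradiction: suppose there exists $I \in \I_{2,K}(\R^{5})$ with $I = T \moddue$, and deduce $\M(\partial I) = \infty$. The strategy is to split $I$ over the pairwise disjoint scaled Klein bottles $K_{i} := \pi_{i} \circ \Lambda_{i}(\mathcal{K})$ composing $K$ and to apply Lemma \ref{lem_neg} (together with Remark \ref{rem_neg}) to each piece.

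Since the $K_{i}$ lie in distinct affine hyperplanes $\{1/i\}\times\R^{4}$, they are pairwise disjoint compact sets at positive mutual distance and at positive distance from the origin. Because $\spt(I) \subset K$ and $\{0\}$ carries no $\Ha^{2}$-mass, setting $I_{i} := I \mres K_{i}$ produces integer rectifiable currents with $I = \sum_{i} I_{i}$ and with pairwise disjoint supports; in particular $\spt(\partial I_{i}) \subset K_{i}$, so the sets $\spt(\partial I_{i})$ are pairwise disjoint, and comparing $I$ with the finite partial sums $\sum_{i \leq N} I_{i}$ yields
\[
\sum_{i=1}^{\infty} \M(\partial I_{i}) \leq \M(\partial I) < \infty.
\]

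Next, for each $i$ I would build a Lipschitz map $g_{i}\colon\R^{5}\to\R^{5}$ that equals the identity on a neighborhood of $K_{i}$ and collapses $K\setminus K_{i}$ onto a single point $p_{i}\in K_{i}$; this is possible because $d(K_{i},K\setminus K_{i}) > 0$, e.g.\ by using a smooth cutoff of the distance to $K_{i}$. Push-forward by a Lipschitz map respects the modulo-$2$ equivalence: applying $(g_{i})_{\sharp}$ to any admissible decomposition $I - T = R + \partial \Sigma + 2Q$ with $\M(R) + \M(\Sigma) < \varepsilon$ gives $(g_{i})_{\sharp}(I-T) = (g_{i})_{\sharp}R + \partial (g_{i})_{\sharp}\Sigma + 2(g_{i})_{\sharp}Q$, with $\M$ of the first two summands bounded by $\Lip(g_{i})^{3}\varepsilon$. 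Since a $2$-current supported on a single point vanishes, $(g_{i})_{\sharp}I = I_{i}$ and $(g_{i})_{\sharp}T = T_{i} := (\pi_{i}\circ\Lambda_{i})_{\sharp}S$, and both are supported in $K_{i}$; hence $I_{i} = T_{i} \moddue$ in $\F_{2,K_{i}}(\R^{5})$.

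Finally, the isometry $\pi_{i}$ identifies $K_{i}$ with the $1/i$-rescaled Klein bottle $\Lambda_{i}(\mathcal{K})\subset\R^{4}$, so Lemma \ref{lem_neg} combined with Remark \ref{rem_neg} yields $\M(\partial I_{i}) \geq c(\mathcal{K})/i$ for every $i$. Summing,
\[
\M(\partial I) \geq \sum_{i=1}^{\infty} \M(\partial I_{i}) \geq c(\mathcal{K})\sum_{i=1}^{\infty}\frac{1}{i} = \infty,
\]
contradicting the integrality of $I$. The main technical obstacle will be the push-forward step: one must verify carefully that $g_{i}$ preserves the modulo-$2$ equivalence on the relevant compact sets and that the pieces of $I$ and $T$ supported in $K\setminus K_{i}$ indeed collapse to zero as $2$-dimensional currents after being mapped to a single point.
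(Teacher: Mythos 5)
Your proof is correct and takes essentially the same route as the paper: restrict $I$ to each rescaled Klein bottle $K_i$, argue that each restriction lies in the class $[(\pi_i\circ\Lambda_i)_\sharp S]$, invoke Lemma \ref{lem_neg} together with Remark \ref{rem_neg} to get the lower bound $c(\mathcal{K})/i$, and sum the divergent harmonic series. The paper asserts the two reductions (superadditivity of $\M(\partial\,\cdot\,)$ over the disjointly supported pieces, and the membership of each restriction in the appropriate equivalence class) without detail, and your Lipschitz collapse map $g_i$ is a clean way to justify the second one.
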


\begin{proof}
Let $K$ and $\left[ T \right]$ be as above, and assume by contradiction that there exists $I \in \I_{2,K}(\R^5)$ with $I \in \left[ T \right]$. Then, the restriction of $I$ to each plane $x_{1} = \displaystyle \frac{1}{i}$ belongs to the class $\left[ (\pi_{i} \circ \Lambda_{i})_{\sharp} S \right]$, and thus by Lemma \ref{lem_neg} and Remark \ref{rem_neg} one has
\[
\M(\partial I) = c(\mathcal{K}) \sum_{i=1}^{\infty} \frac{1}{i} = \infty,
\]
which gives the desired contradiction.
\end{proof}

\begin{remark}
Observe that if we replace $\mathcal{K}$ with $\mathcal{K}' := \mathcal{K} \cup D$, where $D$ is a suitable two-dimensional disc, then Lemma \ref{lem_neg} fails, as there exists $R \in \I_{2,\mathcal{K}'}(\R^4)$ such that $R \in \left[ S \right]$ and $\partial R = 0$. Hence, it is possible to construct an integral representative of $\left[ T \right]$ with support in 
\[
K' := \{0\} \cup \bigcup_{i=1}^{\infty} \pi_{i} \circ \Lambda_{i}(\mathcal{K}').
\] 
\end{remark}

\subsection{Concluding remarks.}

Ambrosio and Wenger proved in \cite[Theorem 4.1]{AW11} a statement similar to our Theorem \ref{t:main}, under the hypothesis that $\partial \left[ T \right] = 0$. They were motivated by the will to prove the analogue of Theorem \ref{p_mass_chain:thm} above when the ambient space is a compact convex subset of a Banach space with mild additional assumptions. Even though our theorem covers also the case with boundary, our proof is considerably simpler than theirs, essentially because we can rely on the polyhedral approximation theorem, which is not available in their context. Actually, our result would follow directly from theirs if one could independently guarantee the validity of the following proposition. However, we were not able to devise a proof independent of Theorem \ref{t:main}. 

\begin{proposition} \label{0sum_prop}
Let $\left[ T \right] \in \I_{1,K}^{p}(\R^n)$. Then, for any $R = \sum_{i=1}^{q} \theta_{i} \delta_{x_{i}} \in \Rc_{0,K}(\R^n)$ such that $R = \partial T \, \modp$ one has:
\begin{equation} \label{0sum}
\sum_{i=1}^{q} \theta_{i} \equiv 0 \, ({\rm mod}\, p).
\end{equation}
\end{proposition}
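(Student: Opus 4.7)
The plan is to combine Theorem \ref{t:main} with the rectifiable-class version of Question \ref{pb1} (formula \eqref{p_rect}) and the elementary observation that the boundary of a compactly supported classical integral $1$-current has total multiplicity zero.

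First, I would invoke Theorem \ref{t:main} to produce a classical integral representative $I \in \I_{1,K}(\R^n)$ of the equivalence class $\left[ T \right]$, so that $I = T \, \modp$ in $\F_{1,K}(\R^n)$. The crucial gain with respect to $T$ is that $\partial I$ is an honest integer rectifiable $0$-current, $\partial I = \sum_{j=1}^{r} \eta_{j} \delta_{y_{j}}$ with $\eta_{j} \in \Z$. Testing against a compactly supported smooth function $\varphi$ which is identically equal to $1$ on an open neighborhood of $\spt(I)$ (so that $d\varphi \equiv 0$ on $\spt(I)$), one obtains
\[
\sum_{j=1}^{r} \eta_{j} = \langle \partial I, \varphi \rangle = \langle I, d\varphi \rangle = 0.
\]

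Next, from $I = T \, \modp$ and the compatibility of $\partial$ with the quotient (formula \eqref{p_bound}), I would deduce $\partial I = \partial T \, \modp$, and therefore $R = \partial I \, \modp$ in $\F_{0,K}(\R^n)$. Since both $R$ and $\partial I$ are integer rectifiable $0$-currents, the rectifiable case of Question \ref{pb1} (formula \eqref{p_rect}) provides $Q = \sum_{k=1}^{s} \mu_{k} \delta_{z_{k}} \in \Rc_{0,K}(\R^n)$ with $R - \partial I = pQ$. Summing the multiplicities on both sides and using the previous step yields
\[
\sum_{i=1}^{q} \theta_{i} = \sum_{j=1}^{r}\eta_{j} + p \sum_{k=1}^{s}\mu_{k} = p \sum_{k=1}^{s}\mu_{k} \equiv 0 \; (\mathrm{mod}\, p),
\]
which is the desired conclusion.

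No genuine obstacle is expected here: all of the hard work is already contained in Theorem \ref{t:main}, and the remaining ingredients (closedness of $\partial$ under the modulo $p$ relation, the rectifiable case of Question \ref{pb1}, and the cutoff argument showing $\sum_j \eta_j = 0$) are either quoted or immediate.
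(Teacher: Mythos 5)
Your proof is correct and leads to the stated conclusion, but it is routed slightly differently from the paper's own argument. The paper applies Corollary \ref{dim0_cor} (the $m=0$ case of Question \ref{pb1}, itself obtained from Theorem \ref{t:main} via Theorem \ref{link:thm}) directly to the flat chain $\partial T - R$: this produces $S\in\Rc_{1,K}(\R^n)$ and $Q\in\Rc_{0,K}(\R^n)$ with $\partial(T-pS)=R+pQ$, whence $T-pS$ is integral by Theorem \ref{b_rect:thm} and the multiplicities of its boundary sum to zero. You instead apply Theorem \ref{t:main} directly to $[T]$, obtain an integral representative $I$, observe that the multiplicities of $\partial I$ sum to zero (the cutoff argument, or simply the fact that $\partial\partial I=0$), and then only need the elementary rectifiable case \eqref{p_rect} to compare $R$ with $\partial I$. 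Both arguments rely on the same hard input, Statement $\mathcal{S}_1$/Theorem \ref{t:main}; yours bypasses the flat-norm closedness machinery of Theorem \ref{link:thm} and substitutes the much easier \eqref{p_rect} applied to $0$-dimensional rectifiable currents, which is arguably cleaner. One minor point worth being explicit about: in the step where you apply \eqref{p_rect} to $R-\partial I$ you need that both $R$ and $\partial I$ are integer rectifiable in $\Rc_{0,K}(\R^n)$ and that $R-\partial I=0\,\modp$ in $\F_{0,K}(\R^n)$; the former is guaranteed because $I$ is integral, and the latter follows by combining the hypothesis $\Fl_K^p(\partial T - R)=0$ with $\partial I=\partial T\,\modp$ in $\F_{0,K}(\R^n)$ coming from \eqref{p_bound}. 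With that spelled out, the argument is complete.
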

 
Assume the validity of the Proposition. An alternative proof of our Theorem \ref{t:main} can be obtained as follows. Let $\left[ T \right] \in \I_{1,K}^{p}(\R^m)$ and let $R = \sum_{i=1}^{q} \theta_{i} \delta_{x_{i}} \in \Rc_{0,K}(\R^n)$ be such that $R = \partial T \, \modp$. Fix $x_{0} \notin \{x_{1}, \dots, x_{q}\}$ and consider the cone $C$ with vertex $x_{0}$ over $R$, i.e. the integral $1$-current 
\begin{equation}
C := \sum_{i=1}^{q} \llbracket S_{i}, \tau_{i}, \theta_{i} \rrbracket,
\end{equation}
where $S_{i}$ is the segment joining $x_{i}$ to $x_{0}$ and $\tau_{i}:= \displaystyle \frac{x_{0} - x_{i}}{|x_{0} - x_{i}|}$. By \eqref{0sum}, the multiplicity of $\partial C$ at $x_{0}$ is an integer multiple of $p$, and thus via a simple computation $\partial (T + C) = 0 \, \modp$. Applying the result of Ambrosio and Wenger, we finally obtain that there exists an integral current $J \in \I_{1,K}(\R^n)$ such that $J = T + C \, \modp$. Hence, $I := J - C$ is an integral current with $I = T \, \modp$. 
\\

Although the analogue of Proposition \ref{0sum_prop} for classical currents is a well known fact (i.e. the sum of the multiplicities in the boundary of an integral $1$-current is zero), the validity of Proposition \ref{0sum_prop} does not follow trivially. Nevertheless, it can in fact be deduced as a consequence of our Corollary \ref{dim0_cor}.

\begin{proof}[Proof of Proposition \ref{0sum_prop}]
Let $T$ and $R$ be as in the statement. Then, since $\Fl_{K}^{p}(\partial T - R) = 0$, Corollary \ref{dim0_cor} implies the existence of currents $Q \in \Rc_{0,K}(\R^n)$ and $S \in \Rc_{1,K}(\R^n)$ such that
\[
\partial T - R = p(Q + \partial S),
\]
that is
\[
\partial(T - pS) = R + pQ.
\]
In particular, $T - pS$ is a classical integral current, and thus the sum of the multiplicities in $R$ must equal that of $-pQ$, which concludes the proof.
\end{proof}

\nocite{*}
\bibliographystyle{plain}
\bibliography{References}

\Addresses

\end{document}